\newtheorem{theorem}{Theorem}[section]
\newtheorem{lemma}[theorem]{Lemma}
\newtheorem{corollary}[theorem]{Corollary}
\newtheorem{proposition}[theorem]{Proposition}
\newtheorem*{theorem*}{Theorem}
\theoremstyle{remark}\newtheorem*{remark}{Remark}
\numberwithin{equation}{section}
\renewcommand{\pmod}[1]{\left(\mathrm{mod}\,#1\right)}
\renewcommand{\phi}{\varphi}
\newcommand{\fd}{\mathfrak d}
\newcommand{\fa}{\mathfrak a}
\newcommand{\fb}{\mathfrak b}
\newcommand{\fu}{\mathfrak u}
\newcommand{\ul}{\underline}
\newcommand{\fr}{\mathfrak r}
\newcommand{\fc}{\mathfrak c}
\newcommand{\fs}{\mathfrak s}
\newcommand{\fp}{\mathfrak p}
\newcommand{\fq}{\mathfrak q}
\newcommand{\fh}{\mathfrak h}
\newcommand{\OK}{\mathcal{O}_K}
\begin{document}

\title[Bounded gaps between product of two primes ]{Bounded gaps between product of two primes in imaginary quadratic number fields}

\author{Pranendu Darbar}
\address{Indian Statistical Institute   \\
Kolkata  \\
West Bengal 700108,  India}
\email[Pranendu Darbar]{darbarpranendu100@gmail.com}

\author{Anirban Mukhopadhyay}
\address{Institute of Mathematical Sciences\\ CIT Campus, Taramani, Chennai 600 113, India and Homi Bhabha National Institute, Training School Complex, Anushakti Nagar, Mumbai 400 094, India}
\email[Anirban Mukhopadhyay]{anirban@imsc.res.in} 

\author{G.K. Viswanadham}
\address{Department of Mathematics, IISER Berhampur, Berhampur, Odisha 760 010, India}
\email[G.K. Viswanadham]{vissu35@gmail.com} 


\keywords{quadratic number fields, product of primes, distribution of primes}

\begin{abstract}
We study the gaps between products of two primes in imaginary quadratic number 
fields using a combination of the methods of Goldston-Graham-Pintz-Yildirim 
\cite{GGPY}, and Maynard \cite{MAY}. 
An important consequence of our main theorem is existence of infinitely many 
pairs $\alpha_1, \alpha_2$ which are product of two primes in the 
imaginary quadratic field $K$ such that $|\sigma(\alpha_1-\alpha_2)|\leq 2$ 
for all embedding $\sigma$ of $K$ if the class number of $K$ is one 
and $|\sigma(\alpha_1-\alpha_2)|\leq 8$ for all embedding $\sigma$ of $K$ 
if the class number of $K$ is two.
\end{abstract}

  File \jobname.tex.\par

\maketitle

\section{Introduction}
One of the classical problems in prime number theory is to study the gaps between prime numbers. The famous twin prime conjecture asserts that there are infinitely many pairs $(p_1, p_2)$ of prime numbers such that $|p_1-p_2|=2$. Although this conjecture remains  out of reach, study of this conjecture leads to several interesting results. The first and very important breakthrough in this direction is the result of Goldston et. al \cite{GPY} who showed that
\begin{equation*}
 \liminf_{n\rightarrow \infty}\frac{p_{n+1}-p_n}{\log p_n}=0\; ,
\end{equation*}
 where $p_n$ denotes the $n$th prime.  Zhang~\cite{ZHN} has subsequently improved this result  by showing that
 \begin{equation*}
  \liminf_{n\rightarrow \infty}(p_{n+1}-p_n)\le 7\times 10^7.
\end{equation*}
 Zhang's results was sharpened and simplified by Mayanard~\cite{MAY}.  
 The Polymath 8b group \cite{POLY}, led by Tao, subsequently proved 
 that Zhang's result holds with 246 instead of $7\times 10^7$.  
 It is proved by Goldston et.al. \cite{GPY} that 246 can be replaced 
 by 16 if one assumes Elliott-Halberstam conjecture.

 \vspace{2mm}
 \noindent
 Let $\{q_n\}_{n\geq 1}$ be the sequence of positive integers which are products of exactly two primes written in the increasing order. The members of this sequence are called $E_2$ numbers. Heuristically problems involving $E_2$ numbers are as difficult as problems involving prime numbers as sieve methods do not seem to distinguish between numbers with even numbers of prime factors and odd number of prime factors (parity principle of Selberg~\cite{selberg}). Hence it is interesting to study the gaps between $E_2$- numbers. This study was initiated by Goldston et.al \cite{GGPY0} who showed that
 \begin{equation}
 \label{gap26}
 \liminf_{n}(q_{n+1}-q_n)\leq 26\; .
 \end{equation}
 Later developing the methods of \cite{GGPY0}, they are able to improve the constant on the right hand of \eqref{gap26} to $6$ in \cite{GGPY}.

\vspace{2mm}
\noindent
Let $K$ be a number field and let $\mathcal{O}_K$ be its ring of integers. We say that an element $\alpha\in \mathcal{O}_K$ is prime if the principle ideal $\alpha\mathcal{O}_K$ is a prime ideal.  Castillo et.al. \cite{CAS} has initiated the study of gaps between primes in number fileds. By extending the methods of Maynard-Tao they showed that for a totally real field $K$ there are infinitely many primes $\alpha_1$ and $\alpha_2$ in $\mathcal{O}_K$ such that $|\sigma(\alpha_1-\alpha_2)|\leq 600$ for every embedding $\sigma$ of $K$. The case when $K$ is imaginary is first considered by Vatwani~\cite{VAT}. In particular it is shown in \cite{VAT} that there are infinitely many pairs $({\bf p}_1, {\bf p}_2)\in \mathbb{Z}[i]\times \mathbb{Z}[i]$ such that ${N}({\bf p}_1-{\bf p}_2)< 246^2$, where ${N}(\cdot)$ denotes the norm function on $\mathbb{Q}(i)$. The method of the proof can be generalized to cover any imaginary quadratic number field with class number 1.

\vspace*{2mm}
\noindent
In the spirit of \cite{CAS, GGPY0, GGPY} it is natural consider the gaps between product of two primes in number fields. Before stating our main result of this article, we will fix some notations. Let $\mathcal P$ be the set of prime numbers in $\mathcal{O}_K$. Let $G_2^K$ be the set of all $\alpha\in \mathcal{O}_K$ which  can be written as product of two elements from $\mathcal P$.
We say that a tuple $(\mathfrak{h}_1,\dots,\mathfrak{h}_k)\in \mathcal{O}_K^k$ is admissible if it does not cover all the residue classes modulo $\mathfrak{p}$ for any prime ideal $\mathfrak{p}$ of $\mathcal{O}_K$. 

\vspace{2mm}
\noindent
Now we are in a position to state the main result of this paper.
\begin{theorem}\label{main theorem}
Let  $K$ be an imaginary quadratic number field and  let $r\geq 2$ be an integer. Then there exists a positive integer $\tilde{k}:=\tilde{k}(r, K)$ such that for any admissible $k$-tuple $(\mathfrak{h}_1, \ldots, \mathfrak{h}_k)\in \mathcal{O}_K^k$ with $k\geq \tilde{k}$, there are infinitely many $\alpha \in \mathcal{O}_K$ such that at least $r$ of $\alpha + \mathfrak{h}_1, \ldots, \alpha+\mathfrak{h}_k$ are $G_2^K$-numbers. 
\end{theorem}
 \noindent
 It is clear from Theorem 1.1 that $\liminf |\sigma(\alpha-\beta)|\leq M(K)$
where $M(K)$ is a constant depends only on $K$ and the $\liminf$ is taken when $\alpha,\beta$ runs over all $G_2^K$ numbers. It will be clear at the end of the proof that the constant depends only on the class number. In the
following corollaries we will precisely give the value of $M(K)$ when the
class number is 1 or 2.
\begin{corollary}\label{corollary: gap 2 number field}
Let $K_d:=\mathbb{Q}\left(\sqrt d\right)$ be imaginary quadratic field with
class number one $($ there are exactly nine such fields corresponding to 
$d=-1, -2, -3, -7, -11, -19, -43, -67$ and $-163 )$.  
There exist infinitely many $G_2^{K_d}$-numbers $\alpha_1, \alpha_2$ 
such that $|\sigma(\alpha_1-\alpha_2)|\leq 2$ for all embeddings 
$\sigma$ of $K_d.$
\end{corollary}

\begin{remark}
For $d=-1$ and $-2$, we consider the admissible pair $\{0,2\}$.
Then there are infinitely many rational primes of the form 
$p_1=a^2+db^2, p_2=m^2+dn^2, p_3=a_1^2+db_1^2, p_4=m_1^2+dn_1^2$ with $p_1p_2=(a^2+db^2)(m^2+dn^2)$ and $p_3p_4=(a^2+db^2)(m^2+dn^2)+2h\begin{vmatrix}
a & \sqrt{d}b \\ 
\sqrt{d}n & m \\
\end{vmatrix}+h^2$ 
with $a, b, m, n, a_1, b_1, m_1, n_1\in \mathbb{Z}$ and $|h|\leq 2.$

Similarly for $d=-3, -7, -11, -19, -43, -67$ and $-163$ 
considering the admissible pair $\{0,2\}$ we get
infinitely many rational primes of the form 
$p_1=(a^2+db^2)/4, p_2=(m^2+dn^2)/4, 
p_3=(a_1^2+db_1^2)/4, p_4=(m_1^2+dn_1^2)/4$ with 
$p_1p_2=(a^2+db^2)(m^2+dn^2)/16$ and 
$p_3p_4=\frac{1}{16}(a^2+db^2)(m^2+dn^2)+\frac{1}{16}\left(2h
\begin{vmatrix}
a & \sqrt{d}b \\ 
\sqrt{d}n & m \\
\end{vmatrix}
+h^2\right)$ 
with $a, b, m, n, a_1, b_1, m_1, n_1\in \mathbb{Z}$ and $|h|\leq 2.$
\end{remark}

\begin{corollary}\label{corollary: gap 8 number fields}
Let $K_d:=\mathbb{Q}\left(\sqrt d\right)$ be imaginary quadratic field with
class number two.  
There exist infinitely many $G_2^{K_d}$-numbers $\alpha_1, \alpha_2$ 
such that $|\sigma(\alpha_1-\alpha_2)|\leq 8$ for all embeddings 
$\sigma$ of $K_d$.
\end{corollary}

This article is organized as follows. 
In Section~\ref{prelims} we provide the necessary preliminaries to 
prove Theorem~\ref{main theorem}. 
In Section~\ref{sec:bombieri} we prove a variant of Bombieri-Vinogradov theorem 
for $G_2^K$-numbers. 
In Section~\ref{method} we explain the method of the proof.  
In Section~\ref{preparations} we will prove some preparatory lemmas 
which are essential for the proof. 
In Section~6 we will choose the appropriate weights. 
Section~\ref{prop:main} is devoted to prove Proposition~\ref{prop:main}. 
In Section~\ref{proofs} we will conclude the proofs of 
Theorem~\ref{main theorem}, Corollary~\ref{corollary: gap 2 number field}
and Corollary \ref{corollary: gap 8 number fields}.
 
 \section{Notations and preliminaries} \label{prelims}

Here and in what follows, $K$ denotes an imaginary quadratic  field unless otherwise mentioned. For much of this article, we follow the  notations of Hinz \cite{HIN1} and Castillo et.al 
\cite{CAS}. 
Being an imaginary quadratic field $K$ has no real embeddings and it has exactly two complex embeddings, namely $\sigma_0$ (the identity) and 
$\sigma$ (complex conjugation). We observe that for any non-zero  $\alpha \in \mathcal{O}_K$, 
$|\sigma(\alpha)|\ge 1.$ For $N>1$, let 
\begin{equation*}
A^0(N)=\{\alpha\in \mathcal{O}_K : 1\le |\sigma(\alpha)|\le N \}
\text{ and }
\mathcal{P}^0(N)=\mathcal{P}\cap A^0(N).
\end{equation*}
Further, for $N_1<N_2$, we define 
\[
A(N_1, N_2)=A^0(N_2)\setminus A^0(N_1), \ \
\mathcal{P}(N_1,N_2)=A(N_1, N_2)\cap \mathcal{P}.
\] 
We would also use
$A(N)$ and $\mathcal{P}(N)$ for$A(2N,N)$ and $\mathcal{P}(2N,N)$ 
respectively. For a set $S$, $|S|$ denotes its cardinality, 
for an element $\alpha\in K$ and an ideal $\fq$ of $\OK$,
$|\alpha|$ and $|\fq|$ denotes the respective norms. 

\begin{remark}
 A clarification about the notations is much called for at this point. For an 
 element $\alpha\in\OK$, $|\alpha|$ denotes its norm whereas $|\sigma(\alpha)|$
 denotes absolute value as a complex number. For imaginary quadratic fields, 
 they are related by 
 $$|\alpha|=\sigma_0(\alpha)\sigma(\alpha)=|\sigma(\alpha)|^2.$$
 Hence $A^0(N)$ as defined above can also be described as
 \[
A^0(N)=\{\alpha\in \mathcal{O}_K : 1\le |\alpha|\le N^2 \}.
\]
These usages will be clear from the context as we proceed.
\end{remark}

For elements $a,b\in \mathcal{O}_K$ and an ideal $\fq$ of $\mathcal{O}_K$,
we write $a\equiv b \mod \fq$ to mean that the ideal generated by $a-b$
is contained in $\fq$, i.e $(\fa-\fb)\subset \fq$. 
Moreover, if ideals $(a)$ generated by $a\in \mathcal{O}_K$ 
and $\fq$ does not have common factor, then we write $(a,\fq)=1$

\vspace{2mm}
\noindent
Given a non-zero ideal $\mathfrak{q}\subseteq \OK$, 
we define analogues of three classical multiplicative functions, namely the norm 
$|\mathfrak{q}|:=|\OK/\mathfrak{q}|$, the Euler {\it phi-function} 
$\varphi(\mathfrak{q}) :=|(\OK/\mathfrak{q})^\times|$ and the M\"obius function 
$\mu(\mathfrak{q}):=(-1)^r$ if $\mathfrak{q}=\mathfrak{p}_1\dots\mathfrak{p}_r$ for 
distinct prime ideals $\mathfrak{p}_1,\dots,\mathfrak{p}_r$ and $\mu(\mathfrak{q})=0$ otherwise.
We use $\tau_k(\fq)$ to denote the number of ways of writing $\fq$ as a product of $k$ factors
and $\omega(\fq)$ to denote the number of distinct prime ideals containing $\fq$. For ideals $\fa, \fb$, we use $[\fa,\fb]$ and $(\fa,\fb)$ to denote
LCM and GCD of $\fa,\fb$.

\vspace{2mm}
\noindent
The $k$-tuple $(\fa_1, \ldots, \fa_k)$ with $\fa_j\in \OK$ for all $j \; (1\leq j\leq k)$ is denoted by $\ul\fa.$ 
We use $w_1, w_2$ to denote prime elements of $\mathcal{O}_K.$ 
For any $R\in \mathbb{R}$, $|\ul\fa|\leq R$ is to be interpreted as $\prod_{j=1}^{k}|\fa_j|\leq R.$
\noindent
The notion of divisibility among $k$-tuples is defined componentwise, i.e, 
\[
\ul\fa|\ul\fb \Leftrightarrow \fa_j|\fb_j \; \; \; \forall 1\leq j \leq k.
\]
For any integral ideal 
$\fq$ of $\mathcal{O}_K, \ul\fa| \fq \Leftrightarrow \prod_{j=1}^{k}\fa_j |\fq.$ 
We use the notation $[\ul\fa, \ul\fb]$ to denote the product of the component-wise 
least common multiples, i.e. $[\ul\fa,\ul\fb]=\prod_{j=1}^{k}[\fa_j, \fb_j]$ 
and $(\fa,\fb)=1$ to mean that the ideals $\fa$ and $\fb$ are coprime, 
where $1$ is the trivial ideal.

\vspace{2mm}
\noindent
For $\mathcal{R}e(s)>1,$ the Dedekind zeta function of $K$ is defined by
\[
\zeta_{K}(s) := \sum_{\mathfrak{q}\subseteq \mathcal{O}_K} |\mathfrak{q}|^{-s}
\]
where sum is over all non-zero ideals of $\mathcal{O}_{K}.$ 
This function admits meromorphic continuation to the whole complex plane with 
a pole at $s=1$. Let $c_K$ denote its residue at $s=1$.
\vspace{2mm}
\noindent
 Now we note that \cite[page 4]{CAS} the number of elements $\alpha\in A(N)$ satisfying a congruence condition 
 $\alpha \equiv \alpha_0 \pmod{\mathfrak{q}}$ is given by
\[
\frac{|A(N)|}{|\mathfrak{q}|} + O(|\partial A(N,\mathfrak{q})|),
\]
where
\[
|\partial A(N,\mathfrak{q})| \ll
1+\left(\frac{|A(N)|}{|\mathfrak{q}|}\right)^{\frac{1}{2}}. 
\]

\vspace{2mm}
\noindent
The following lemma is central in estimation of the sums that arise in Selberg's higher dimensional sieve.

\begin{lemma}[Lemma 2.5, \cite{CAS}] \label{lem:Dimensional sieve}
Suppose $\gamma$ is a multiplicative function on the non zero ideals of $\OK$ such that there are constants $\kappa >0, A_1 > 0$, $A_2 \ge 1$, and $L \ge 1$ satisfying
\[
0 \leq \frac{\gamma(\mathfrak{p})}{|\mathfrak{p}|} \leq 1 - A_1,
\]
and
\[
-L \leq
\sum_{w \leq |\mathfrak{p}|< z}\frac{\gamma(\mathfrak{p}) \log |\mathfrak{p}|}{|\mathfrak{p}|} - \kappa \log (z/w)
\leq A_2,
\]
for any $2 < w \leq z$. Let $h$ be the completely multiplicative function defined on prime ideals by 
$h(\mathfrak{p}) = \gamma(\mathfrak{p}) / (|\mathfrak{p}| - \gamma(\mathfrak{p}))$. 
Let $G\colon [0,1] \to \mathbb{R}$ be a piecewise differentiable function and 
let $G_{max} = \sup_{t \in [0,1]}(|G(t)| + |G'(t)|)$. Then
\[
\sum_{|\mathfrak{d}| < z} \mu(\mathfrak{d})^2 h(\mathfrak{d}) G \left(\frac{\log |\mathfrak{d}|}{\log z} \right)
=
\mathfrak{S}\frac{c_K^\kappa (\log z)^\kappa}{\Gamma(\kappa)} \int_0^1 G(x)x^{\kappa - 1} dx
+ O_{K,A_1, A_2, \kappa} \left(L G_{max} (\log z)^{\kappa - 1}
\right),
\]
where $c_K := \mathop{\mathrm{Res}}_{s=1} \zeta_K(s)$ and the singular series
\[
\mathfrak{S} =
\prod_{\mathfrak{p}} \left(1 - \frac{\gamma(\mathfrak{p})}{|\mathfrak{p}|} \right)^{-1}
\left(1 - \frac{1}{|\mathfrak{p}|} \right)^\kappa.
\]
\end{lemma}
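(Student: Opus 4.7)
The plan is to adapt the Selberg-Delange method to the number field setting. First, I would introduce the Dirichlet series $F(s)=\sum_{\fd}\mu^2(\fd)h(\fd)|\fd|^{-s}=\prod_{\fp}(1+h(\fp)|\fp|^{-s})$ and compare it against a power of the Dedekind zeta function by writing
\[
F(s)=\zeta_K(s+1)^\kappa H(s),\qquad H(s)=\prod_{\fp}\Big(1+\frac{h(\fp)}{|\fp|^s}\Big)\Big(1-\frac{1}{|\fp|^{s+1}}\Big)^\kappa.
\]
Using $h(\fp)=\gamma(\fp)/(|\fp|-\gamma(\fp))$, a direct expansion shows each local factor of $H(s)$ deviates from $1$ by $O(|\fp|^{-2-2\mathcal{R}e(s)})$ (after the $\gamma(\fp)/|\fp|$ contribution cancels with the linear term of $(1-|\fp|^{-s-1})^\kappa$), so $H(s)$ converges absolutely in a half-plane $\mathcal{R}e(s)>-\eta$ with $\eta>0$ depending only on $K,A_1,A_2$, and $H(0)=\mathfrak{S}$. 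Crucially, the hypothesis $\sum_{w\le|\fp|<z}\gamma(\fp)\log|\fp|/|\fp|=\kappa\log(z/w)+O(L+A_2)$ together with partial summation gives the quantitative estimate $H(s)=\mathfrak{S}+O(L|s|)$ near $s=0$.

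Next, I would analyse the unweighted partial sum $T(x):=\sum_{|\fd|\le x}\mu^2(\fd)h(\fd)$ via Perron's formula,
\[
T(x)=\frac{1}{2\pi i}\int_{(\sigma)}F(s)\frac{x^s}{s}\,ds,\qquad\sigma>0,
\]
deforming the contour into a truncated Hankel contour around the branch point at $s=0$ of $\zeta_K(s+1)^\kappa$. The polar contribution $F(s)\sim\mathfrak{S}c_K^\kappa s^{-\kappa}$ yields the main term $\mathfrak{S}c_K^\kappa(\log x)^\kappa/\Gamma(\kappa+1)$. The classical zero-free region and convexity bounds for $\zeta_K$ control the horizontal and vertical portions of the contour, and the $O(L|s|)$ deviation of $H$ from $\mathfrak{S}$ near $s=0$ propagates through the Hankel integral, producing a total error of order $L(\log x)^{\kappa-1}$.

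Finally, the weighted sum is recovered by partial summation. Writing $G(\log|\fd|/\log z)=G(1)-\int_{\log|\fd|/\log z}^{1}G'(u)\,du$ gives
\[
\sum_{|\fd|<z}\mu^2(\fd)h(\fd)G\!\left(\frac{\log|\fd|}{\log z}\right)=G(1)T(z)-\int_0^1 G'(u)T(z^u)\,du.
\]
Substituting the asymptotic for $T(z^u)$ and invoking the integration-by-parts identity $\int_0^1 G'(u)u^\kappa\,du=G(1)-\kappa\int_0^1 G(x)x^{\kappa-1}\,dx$, the main terms combine, via $\kappa/\Gamma(\kappa+1)=1/\Gamma(\kappa)$, to the claimed expression $\mathfrak{S}c_K^\kappa(\log z)^\kappa\Gamma(\kappa)^{-1}\int_0^1 G(x)x^{\kappa-1}\,dx$, while the error terms combine to $O(LG_{max}(\log z)^{\kappa-1})$ on using $\int_0^1|G'(u)|\,du\le G_{max}$.

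I expect the main obstacle to be establishing the analytic continuation of $H(s)$ to a definite strip past the imaginary axis with the quantitatively sharp bound $H(s)=\mathfrak{S}+O(L|s|)$: this is what forces the $L$-linear error rather than something weaker, and extracting it from the one-sided Chebyshev-type estimate in the hypothesis requires careful bookkeeping through partial summation at the prime ideal level, combined with explicit tail estimates for $\sum_\fp\gamma(\fp)^2/|\fp|^2$ in $\OK$. Once this analytic input is in place, the shifted-contour computation and the partial-summation manipulation are standard, and the constants depend only on $K$, $A_1$, $A_2$, $\kappa$ as required.
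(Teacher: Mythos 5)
The paper does not prove this lemma; it is quoted verbatim as Lemma~2.5 of \cite{CAS}, where the argument is elementary in the classical Halberstam--Richert mold: one first establishes the unweighted asymptotic $\sum_{|\fd|<z}\mu^2(\fd)h(\fd) = \mathfrak{S}\,c_K^\kappa(\log z)^\kappa/\Gamma(\kappa+1) + O(L(\log z)^{\kappa-1})$ by real-variable partial summation against the Mertens-type hypothesis together with the Landau prime-ideal theorem (which supplies $c_K$), and then passes to the weighted sum by exactly the integration-by-parts identity you use in your third paragraph. Your third paragraph is therefore essentially the reference's argument; it is your first two paragraphs, aiming to produce the unweighted asymptotic via Selberg--Delange, that depart from it, and they contain a genuine gap.

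You assert that each local factor of $H(s)=F(s)\zeta_K(s+1)^{-\kappa}$ deviates from $1$ by $O(|\fp|^{-2-2\mathrm{Re}(s)})$ because ``the $\gamma(\fp)/|\fp|$ contribution cancels with the linear term of $(1-|\fp|^{-s-1})^\kappa$.'' But the linear term of the product is $\bigl(h(\fp)-\kappa/|\fp|\bigr)|\fp|^{-s}$, and $h(\fp)-\kappa/|\fp| = (\gamma(\fp)-\kappa)/|\fp| + O(|\fp|^{-2})$; the cancellation you need holds only if $\gamma(\fp)=\kappa$ pointwise. The hypotheses of the lemma make $\gamma(\fp)$ equal to $\kappa$ only \emph{on average} in the Mertens sense, and place no pointwise constraint beyond $0\le\gamma(\fp)/|\fp|\le 1-A_1$; in particular $\gamma(\fp)-\kappa$ may be bounded away from zero for every $\fp$. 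Consequently the Euler product for $H(s)$ need not converge absolutely in any open half-plane $\mathrm{Re}(s)>-\eta$; its abscissa of absolute convergence can be exactly $\mathrm{Re}(s)=0$. Without a holomorphic (indeed bounded) continuation of $H$ past the imaginary axis you cannot shift the Perron contour across the branch point of $\zeta_K(s+1)^\kappa$ at $s=0$, so the Hankel-contour extraction of the main term is not available. This is not, as your final paragraph suggests, merely a bookkeeping difficulty in proving $H(s)=\mathfrak{S}+O(L|s|)$ near $s=0$; the function $H$ need not be defined where you want to evaluate it. The elementary route of \cite{CAS} sidesteps this entirely by never continuing in the complex variable.
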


\vspace{2mm}
\noindent

The following lemma is a consequence of Minkowski's lattice point theorem 
(see \cite[page 12]{CAS}).
\begin{lemma}\label{lem:size of A(N)}
Let $A^{0}(N)$ and $A(N)$ be defined as above. We have
 \[ 
  |A^0(N)|=(1+o(1)) \frac{2\pi N^2}{\sqrt{|D_K|}}
\  {\text and } \
  |A(N)| =(1+o(1))\frac{6\pi N^2}{\sqrt{|D_K|}}
 \]
where $D_K$ is the discriminant of $K$.
\end{lemma}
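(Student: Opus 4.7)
The strategy is to realize $A^0(N)$ (and hence $A(N)$) as lattice points in a bounded region of Euclidean space and then apply a standard lattice-point counting principle. Concretely, I would use the Minkowski embedding
\[
K\hookrightarrow \mathbb{R}^{r_1}\times\mathbb{C}^{r_2}\cong\mathbb{R}^d,
\qquad \alpha\mapsto\bigl(\sigma_1(\alpha),\ldots,\sigma_{r_1}(\alpha),\tau_1(\alpha),\ldots,\tau_{r_2}(\alpha)\bigr),
\]
identifying each complex coordinate with $\mathbb{R}^2$ via $z\mapsto(\mathrm{Re}\,z,\mathrm{Im}\,z)$. Under this identification, $\OK$ is a full-rank lattice $\Lambda_K\subset\mathbb{R}^d$ of covolume $2^{-r_2}\sqrt{|D_K|}$ (this is the standard discriminant computation).

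Next I would observe that $A^0(N)$ is exactly $\Lambda_K\cap\Omega_N$, where
\[
\Omega_N \;=\; [1,N]^{r_1}\times\bigl\{z\in\mathbb{C}:\,1\le|z|\le N\bigr\}^{r_2}.
\]
The Euclidean volume is
\[
\mathrm{vol}(\Omega_N)=(N-1)^{r_1}\bigl(\pi(N^2-1)\bigr)^{r_2}=(1+o(1))\,\pi^{r_2}N^d.
\]
Now I would apply a lattice-point counting result of Lipschitz/Davenport type: for a bounded region with boundary parametrizable by finitely many Lipschitz maps from bounded sets in $\mathbb{R}^{d-1}$, the number of points of $\Lambda_K$ in $\Omega_N$ equals $\mathrm{vol}(\Omega_N)/\mathrm{covol}(\Lambda_K)$ up to an error controlled by the $(d-1)$-content of the boundary. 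For the present $\Omega_N$, rescaling by $N$ shows this boundary contribution is $O(N^{d-1})$. Combining,
\[
|A^0(N)| \;=\; \frac{\mathrm{vol}(\Omega_N)}{2^{-r_2}\sqrt{|D_K|}} + O(N^{d-1})
\;=\;(1+o(1))\,\frac{(2\pi)^{r_2}N^d}{\sqrt{|D_K|}},
\]
which is the first asserted asymptotic.

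For the second formula I would simply use $|A(N)|=|A^0(2N)|-|A^0(N)|$ and substitute:
\[
|A(N)|=(1+o(1))\frac{(2\pi)^{r_2}}{\sqrt{|D_K|}}\bigl((2N)^d-N^d\bigr)
=(1+o(1))\frac{(2\pi)^{r_2}(2N)^d(1-2^{-d})}{\sqrt{|D_K|}}.
\]

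The only subtle point is the lattice-point count itself: the factor $\{1\le|z|\le N\}$ is non-convex, so Minkowski's classical convex-body theorem does not apply directly. The obstacle is handled by using the standard Lipschitz-boundary version of the counting principle (equivalently, writing the annulus as a difference of two disks and applying Gauss-style counting to each, with the circle boundary contributing $O(N)$ per complex factor). Once that bound is in place, everything reduces to the volume computation above.
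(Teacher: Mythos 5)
Your proof is correct and follows exactly the route the paper intends — the paper itself only cites Castillo et al.\ (``a consequence of Minkowski's lattice point theorem, see \cite{CAS} p.~12''), and the cited argument is precisely this: Minkowski-embed $\OK$ as a lattice of covolume $2^{-r_2}\sqrt{|D_K|}$, count lattice points in the product of intervals and annuli via the Lipschitz/Davenport principle, and subtract. You correctly handle the one subtlety (the annulus is not convex, so the convex-body form of Minkowski does not apply and the Lipschitz-boundary version is needed) and the arithmetic $\pi^{r_2}N^d / (2^{-r_2}\sqrt{|D_K|}) = (2\pi)^{r_2}N^d/\sqrt{|D_K|}$ and the telescoping $|A^0(2N)|-|A^0(N)|$ both check out.
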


Let $\omega_K$ be the number of roots of unity contained in $K$ and $h_K$ be the
class number of $K$.
The following theorem is a special case of Mitsui's generalized Prime number theorem \cite{MIT}.
\begin{lemma}\label{lem:PNT1}
Let $\mathcal{P}^{0}(N)$ be defined as above. We have
\[
 \mid \mathcal P^0(N)\mid
 =\frac{\omega_K}{h_K R_K}
 \int_2^{N^2}
 \frac{du}{\log u}
 +O_K( N^2 \exp(-c\sqrt{\log N}))
\]
where $c$ is a non-zero positive real number.
\end{lemma}
\noindent
We denote $m_K:=\frac{\omega_K}{h_K R_K}$ as Mitsui's constant.
As a direct consequence of Lemma \ref{lem:PNT1} we get
\begin{lemma}\label{lem:PNT2} Let $\mathcal{P}^0(N)$ be defined as above. 
Then we have
\[ 
|\mathcal{P}^0(N)| =\frac{\omega_K}{h_K} \frac{N^2}{\log (N^2)}
\left(1+O\left(\frac{1}{\log N}\right)\right).  
\]
\end{lemma}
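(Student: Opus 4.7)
The plan is to deduce Lemma \ref{lem:PNT2} from Lemma \ref{lem:PNT1} by extracting the leading asymptotic of the multiple integral
\[
I(N) := \idotsint\limits_{[2,N]^{r_1}\times[2,N^2]^{r_2}}\frac{du_1\cdots du_{r_1+r_2}}{\log(u_1\cdots u_{r_1+r_2})}.
\]
Since the Mitsui constant $w_K/(2^{r_1}h_K\mathcal{R}_K)$ is precisely the prefactor in Lemma \ref{lem:PNT1}, and since the error term $O_K(N^d\exp(-c\sqrt{d\log N}))$ there is smaller than any negative power of $\log N$, the statement reduces to showing
\[
I(N) = \frac{N^d}{\log(N^d)}\left(1+O\left(\frac{1}{\log N}\right)\right).
\]

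To evaluate $I(N)$, I would change variables via $u_i = N^{x_i}$ for $1\le i\le r_1$ and $u_i = N^{2x_i}$ for $r_1 < i\le r_1+r_2$. Writing $w_i=1$ for $i\le r_1$ and $w_i=2$ otherwise, and setting $L=\sum_i w_i x_i$, the denominator becomes $(\log N)\,L$ and the Jacobian contributes the factor $2^{r_2}(\log N)^{r_1+r_2}N^{L}$. Thus
\[
I(N) = 2^{r_2}(\log N)^{r_1+r_2-1}\idotsint\limits_{R_N}\frac{N^{L}}{L}\,dx_1\cdots dx_{r_1+r_2},
\]
where $R_N\subseteq [0,1]^{r_1+r_2}$ differs from the full unit box only by the thin strip $\{x_i<(\log 2)/(w_i\log N)\}$ near the origin. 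The integrand is exponentially concentrated near the corner where each $x_i=1$ and $L=d$, so I would substitute $z_i = 1-x_i$ and expand
\[
\frac{1}{L} = \frac{1}{d-\sum_i w_i z_i} = \frac{1}{d}\left(1+O\Bigl(\sum_i w_i z_i\Bigr)\right).
\]
The integration against $e^{-(\log N)\sum_i w_i z_i}$ over $z_i\in[0,1]$ then gives the main contribution $N^d/(d(\log N)^{r_1+r_2}2^{r_2})$, while the correction from the Taylor expansion of $1/L$ produces a relative error $O(1/\log N)$.

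Putting the factors together yields $I(N) = (N^d/(d\log N))(1+O(1/\log N)) = (N^d/\log(N^d))(1+O(1/\log N))$, which on substitution into Lemma \ref{lem:PNT1} is exactly the claim. The only technicality is that the thin strip cut off by $u_i\ge 2$ and the region of the box away from the corner $(1,\ldots,1)$ both contribute $O(N^{d-\eta})$ for some fixed $\eta>0$ by virtue of the exponential factor $N^L$, so they are absorbed into the error term without difficulty. Overall this is a short routine calculation, and I anticipate no serious obstacle; indeed, it is precisely the content of Lemma~6 of \cite{HIN2}, which could be quoted directly.
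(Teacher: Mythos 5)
Your proposal is correct and follows exactly the route the paper takes: the paper deduces Lemma~\ref{lem:PNT2} from Lemma~\ref{lem:PNT1} by citing Lemma~6 of \cite{HIN2} for the asymptotic $I(N)\sim N^d/\log(N^d)$, which you note explicitly at the end. Your Laplace-type change of variables $u_i=N^{w_ix_i}$ with $\sum w_i=d$ and $\prod w_i=2^{r_2}$ is a sound way to verify that cited estimate directly, and the relative error $O(1/\log N)$ coming from the Taylor expansion of $1/L$ near the corner is handled correctly.
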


The following lemma is Dedekind's class number formula.
\begin{lemma}[\cite{Neukirch}, Corollary 5.11 ]
\label{lem:CNF} 
Let $c_K, \omega_K$ and $ h_K$ be defined as above. We have
\[ c_K = \frac{2\pi h_K}{\omega_K \sqrt{|D_K|}}. \]
\end{lemma}

\begin{lemma}\label{lem:merten's theorem on number field}
Let $K$ be an algebraic number field. For any natural number $R\ge 2$, 
we have
\[
\sum_{\substack{\fu\subseteq \mathcal{O}_K \\ |\fu|\leq R}}\frac{1}{\mid\fu\mid} \ll_{K} \log R
\quad \text{ and } \quad
\sum_{\substack{\fp\in \mathcal{P} \\ |\fp|\leq R}}\frac{1}{|\fp|}\ll_{K} \log \log R ,
\]
where first sum is over all non-zero integral ideals of $\OK$ whose norm is less than or equal to R.
\end{lemma}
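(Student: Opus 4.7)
The plan is to prove the two estimates separately by reducing each to a classical input.

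For the first estimate, let $a_K(n) = \#\{\fu\subseteq\OK : |\fu|=n\}$ and $S(x) = \sum_{n\le x} a_K(n)$. Landau's ideal theorem -- equivalent to $\zeta_K(s)$ having a simple pole at $s=1$ with residue $c_K$ -- gives
\[ S(x) = c_K x + O_K(x^{1-1/d}), \]
so in particular $S(x) \ll_K x$. Abel summation with $f(t)=1/t$ then yields
\[ \sum_{|\fu|<R}\frac{1}{|\fu|} \;=\; \sum_{n<R}\frac{a_K(n)}{n} \;=\; \frac{S(R)}{R} + \int_1^R \frac{S(t)}{t^2}\,dt \;\ll_K\; \log R \]
for $R\ge 2$, which is the first bound.

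For the second estimate, I would reduce to rational primes. Each prime element of $\OK$ generates a prime ideal, and there are exactly $w_K$ prime elements per prime ideal (associates of one another). Moreover, every prime ideal $\fq$ of $\OK$ lies above a unique rational prime $p$ with $|\fq| = p^{f(\fq/p)} \ge p$, and at most $d=[K:\mathbb{Q}]$ prime ideals lie above any given $p$. Grouping by the rational prime below,
\[ \sum_{\substack{\fp\in\mathcal{P}\\|\fp|\le R}}\frac{1}{|\fp|} \;\le\; w_K \sum_{\substack{\fq\text{ prime ideal}\\|\fq|\le R}} \frac{1}{|\fq|} \;\le\; w_K d \sum_{p\le R}\frac{1}{p} \;\ll_K\; \log\log R, \]
where the last inequality is the classical Mertens theorem in $\mathbb{Z}$.

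Both arguments are essentially textbook, so I do not anticipate a genuine obstacle; the only nontrivial external inputs are Landau's asymptotic $S(x)\sim c_K x$ for the ideal count and the rational Mertens estimate for the prime sum. The only small pieces of care are carrying out the partial summation cleanly in the first bound and keeping track of the factor $w_K$ (passing between prime elements and prime ideals) in the second, both of which are absorbed harmlessly into the constant indicated by $\ll_K$.
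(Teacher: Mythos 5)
The paper itself offers no proof of this lemma, simply tagging it as ``standard estimates from algebraic number theory,'' so there is no argument of theirs to compare against; your job is to verify the statement from first principles, and your write-up essentially does that.

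Your first bound is fine: Landau's ideal-count asymptotic $S(x)=c_Kx+O_K(x^{1-1/d})$ plus Abel summation with $f(t)=1/t$ gives exactly $\sum_{|\fu|<R}|\fu|^{-1}\ll_K\log R$ (for $R\ge 2$), and the partial-summation identity you wrote is correct.

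The second bound is where a small caveat is in order. The paper's $\mathcal{P}$ is the set of prime \emph{elements}, and you convert from prime elements to prime ideals by asserting ``exactly $w_K$ prime elements per prime ideal.'' The associates of a prime element are indexed by the full unit group $\OK^\times$, which has $w_K$ elements only when the unit rank $r_1+r_2-1$ is zero, i.e.\ for $K=\mathbb Q$ or $K$ imaginary quadratic. For any other number field $\OK^\times$ is infinite, so there are infinitely many prime elements of a given norm and the displayed sum over $\mathcal{P}$ would actually diverge. In other words, the lemma as literally stated (``Let $K$ be an algebraic number field'') only makes sense if the second sum is read as being over prime \emph{ideals} (the more standard Mertens-type statement), or if $K$ is tacitly assumed imaginary quadratic as in the rest of the paper. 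Your reduction --- at most $d$ primes of $\OK$ above each rational $p$, each with $|\fq|\ge p$, then classical Mertens over $p\le R$ --- is exactly the right argument once one works with prime ideals, and the extra factor $w_K$ you carry is correct and harmless in the imaginary-quadratic case where the lemma is actually applied. So there is no real gap for the purposes of the paper, but you should either state the $w_K$ passage as valid only for fields with finite unit group, or prove the bound directly for prime ideals and note that this is what the lemma must mean in general.
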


\section{A generalization of Bombieri-Vinogradov theorem}
\label{sec:bombieri}
A subset $S$ of $\OK$ is said to have {\it level of distribution} $\vartheta$ for $0<\vartheta\le 1$ if for any $C>0$
there exists a constant $B=B(C)$ such that
\begin{equation}\label{eq:1.17}
\sum_{|\fq|\leq \frac{|A^0(N)|^{\vartheta}}{(\log |A^0(N)|)^{B}}} 
\max_{M\le N}
\max_{\substack{a\\ (a,\fq)= 1}} 
\Bigg|  \sum_{\substack{w\in S\cap A^0(M) \\ w \equiv a \pmod \fq}} 
1 -\frac{|S\cap A^0(M)|}{\varphi(\fq)}\Bigg|
   \ll_{A,K} \, \frac{|A^0(N)|}{(\log N)^C}\; ,
\end{equation}
\noindent
An analog of Elliott-Halberstam conjecture for number fields predicts that 
the inequality (\ref{eq:1.17}) holds with 
 any $\vartheta$ in $0<\vartheta \le 1$.
Hinz \cite{HIN1} showed that primes have level of distribution $\frac{1}{2}$  in totally real algebraic number fields. 
Huxley \cite{ HUX} obtained level of distribution $\frac{1}{2}$ for an weighted version of 
(\ref{eq:1.17}).  
The $G_2^K$-numbers for $K=\mathbb Q$ was shown by Motohashi to have level of distribution $\frac{1}{2}$.
 
\vspace{2mm}
\noindent
For our purposes, it is convenient to define the following related quantities. 
\begin{align*} 
 &\pi^\flat (N)=|\mathcal{P}^0(N)|, \; \; \; \; \; 
 \pi^\flat (N;\mathfrak{q},\fa) = \sum_{\substack{ w\in \mathcal{P}^0(N)\\
  w\equiv \fa \pmod {\mathfrak{q}}}} 1 ,\; \;\; \; \; \\
&\varepsilon(N;\mathfrak{q},\alpha) = \pi^\flat(N;\mathfrak{q},\alpha) - \frac{1}{\phi(\mathfrak{q})} \pi^\flat(N), \; \; \;
\varepsilon^*(N;\mathfrak{q}) = \max_{M\le N} \max_{\alpha; (\alpha,\mathfrak{q})=1} |\varepsilon(M;\mathfrak{q},\alpha)|\; .
\end{align*}
 
Using a theroem of \cite{HUX} and following the argument in Lemma $10.2$ of \cite{VAT}, we prove the following generalization of Bombieri-Vinodradov theorem.
\begin{proposition}\label{Bombieri vinogradov theorem on number fields}
Let $K$ be an imaginary quadratic number field. Then \eqref{eq:1.17} 
holds for any  $\vartheta\leq \frac{1}{2}$ when $\mathcal{S}=\mathcal{P}$.
\end{proposition}
\begin{proof}
Let $\fq$ be an ideal in $\mathcal{O}_K$. We denote the ray class group 
$\pmod \fq$ by $\mathcal{C}_{\fq}$ and a ray class by $\mathcal{L}_{\fq}$. Let $\pi(x, K)$ be the number of prime ideals in $\mathcal{O}_K$ of norm $\leq x$ and $\chi_{\mathbb{P}}$ be the characteristic function of the prime ideals in $\mathcal{O}_K$. We define
\[
E(x, \fq, \mathcal{L}_{\fq})=\displaystyle\sum_{\substack{\fa \subset \mathcal{O}_K\\ |\fa|\leq x\\ \fa \in \mathcal{L}_{\fq}}}\chi_{\mathbb{P}}(\fa)-\frac{\pi(x, K)}{h(\fq)},
\]
where $h(\fq)$ denotes the cardinality of the ray class group $\mathcal{C}_{\fq}$.
\begin{lemma}[Huxley \cite{HUX}]\label{Huxley theorem}
Using the notations as above, for any $A>0$, there exist a real number $B>0$ such that for any $\vartheta \leq \frac{1}{2}$ we have
\[
\displaystyle\sum_{|\fq|\leq \frac{x^{\vartheta}}{(\log x)^B}}\frac{h(\fq)}{\phi(\fq)}\max_{\mathcal{L}_{\fq}\in \mathcal{C}_{\fq}}\max_{y\leq x}\left|E_{\mathbb{P}}(y, \fq, \mathcal{L}_{\fq})\right|\ll \frac{x}{(\log x)^A}.
\]
\end{lemma}
\noindent
We have the following  relation between the number of ray classes and 
the class number (\cite{VAT}):
\[
h(\fq)=\frac{\phi(\fq)h_K}{[U:U_{\fq, 1}]}
\]
where $U$ is the unit group of $\mathcal{O}_K$, $U_{\fq, 1}
=\{\alpha\in U : \alpha\equiv 1 \pmod \fq, \alpha \succ 0\}$ and 
$h_K$ is the class number of $K$ where $\alpha\succ 0$ means all the 
real conjugates (if any) of $\alpha$ are positive.

\vspace{2mm}
\noindent
Now we will estimate the index set $[U: U_{\fq, 1}]$. To do that we define
the following homomorphism
\[
\psi : U \to \left(\mathcal{O}_K/ \fq\right)^{*}
\]
by $\psi(u)=u \pmod \fq$. Then the kernel of $\psi$ is $U_{\fq, 1}$ and 
image of $\psi$ is the residue classes $\pmod \fq$ that contain a unit. 
Let $T_{\fq}=\text{Im}(\psi)$. Then $|T_{\fq}|=[U: U_{\fq, 1}]$ and 
$\frac{h(\fq)}{\phi(\fq)}=\frac{h}{|T_{\fq}|}$. Since number of units in a imaginary 
quadratic number field is $2, 4$ or $6$, so if $\fu_1, \fu_2 \in U$ satisfies 
$\fu_1\equiv \fu_2 \pmod \fq$ then $|\fq|$ must divide $|\fu_1-\fu_2|$, 
which is atmost $4$. Thus for $|\fq|>4$ we see that $T_{\fq}=|U|$, which only depends only on $K$ and not on $\fq$. 
Therefore using these estimates, from Lemma \ref{Huxley theorem} 
we obtain the following.
\begin{lemma}\label{truncated huxley theorem}
Using the notation as in Lemma \ref{Huxley theorem}, 
for any $A>0$ there exist a positive real number $B$ such that for any 
$0 < \vartheta \leq \frac{1}{2}$, we have
\begin{align}\label{distribution of truncated huxley theorem}
\displaystyle\sum_{4<|\fq|\leq \frac{x^{\vartheta}}{(\log x)^B}}
\max_{\mathcal{L}_{\fq}\in \mathcal{C}_{\fq}}\max_{y\leq x}
\left|E_{\mathbb{P}}(y, \fq, \mathcal{L}_{\fq})\right|\ll_K \frac{x}{(\log x)^A}.
\end{align}
\end{lemma}
\noindent
\textbf{Deduction of Proposition \ref{Bombieri vinogradov theorem on number fields} from Lemma \ref{truncated huxley theorem}}. Let $\fa \in \mathcal{O}_K$, $(\fa, \fq)=1$ and $\mathcal{L}_{\fq}(\fa)$ be the ray class containing $(\fa)$. Then from \eqref{distribution of truncated huxley theorem} we get
\begin{align}\label{distribution of truncated huxley theorem 2}
\displaystyle\sum_{4<|\fq|\leq \frac{x^{\vartheta}}{(\log x)^B}}\max_{(\fa, \fq)=1}\max_{y\leq x}\left|E_{\mathbb{P}}(y, \fq, \mathcal{L}_{\fq}(\fa))\right|\ll \frac{x}{(\log x)^A}.
\end{align}
It is easy to see that all integral ideals belonging to $\mathcal{L}_{\fq}(\fa)$ are 
principal. Therefore we obtain
\[
\sum_{\substack{\fa \subset \mathcal{O}_K\\ |\fa|\leq y\\ \fa \in \mathcal{L}_{\fq}}}\chi_{\mathbb{P}}(\fa)=\sum_{\substack{\eta \in \mathcal{O}_K\\ |\eta|\leq y\\ (\eta) \in \mathcal{L}_{\fq}(\fa)}}\chi_{\mathbb{P}}(\eta).
\]
We also observe that there is an one to one correspondence between
\[
\{ \eta \in \mathcal{P}, |\eta|\leq x, (\eta)\in \mathcal{L}_{\fq}(\fa)\} 
\quad \text{ and } \quad \{w\in \mathcal{P}, |w|\leq x, w\equiv \fa (\fq)\}.
\]
Thus we have
\[
\sum_{\substack{\eta \in \mathcal{O}_K\\ |\eta|\leq y\\ (\eta) \in \mathcal{L}_{\fq}(\fa)}}
\chi_{\mathbb{P}}(\eta)= \displaystyle\sum_{\substack{w \in \mathcal{O}_K\\ |w|\leq y\\ w\equiv \fa (\fq)}}\chi_{\mathbb{P}}(w).
\]
For $|\fq|>4$, we recall that $h(\fq)=\frac{h_K\phi(\fq)}{|U|}$. So from \eqref{distribution of truncated huxley theorem 2}, we get
\begin{align}\label{distribution of truncated huxley theorem 3}
\displaystyle\sum_{4<|\fq|\leq \frac{x^{\vartheta}}{(\log x)^B}}\max_{(\fa, \fq)=1}\max_{y\leq x}\bigg|\displaystyle\sum_{\substack{w \in \mathcal{O}_K\\ |w|\leq y\\ w\equiv \fa (\fq)}}\chi_{\mathbb{P}}(w)-\frac{|U|\pi(y, K)}{h_K\phi(\fq)}\bigg|\ll \frac{x}{(\log x)^A}
\end{align}
for any $\vartheta\leq \frac{1}{2}$ and for any $A>0$. Now Prime ideal theorem tells us
\begin{align}\label{prime ideal theorem}
\pi(y, K)\sim \frac{y}{\log y}
\end{align}
Also from Lemma \ref{lem:PNT2} and using $\omega_K=|U|$, we get
\begin{align}\label{lem: PNT 3}
|\mathcal{P}^{0}(y^{1/2})|\sim \frac{|U|}{h_K}\frac{y}{\log y}.
\end{align}
Combining \eqref{prime ideal theorem} and \eqref{lem: PNT 3} we obtain 
\begin{align}\label{relation between prime and principal ideal}
\pi(y, K)\sim \frac{h_K}{|U|}|\mathcal{P}^{0}(y^{1/2})|.
\end{align}
Also note that 
\[
 \sum_{\substack{ |w|\le y \\ w\equiv \fa(\fq)}}\chi_{\mathbb{P}}(w)
 =\sum_{\substack{ w\in \mathcal{P}^0(y^{1/2}) \\ w\equiv \fa(\fq) }} 1.
\]
From \eqref{relation between prime and principal ideal} and 
\eqref{distribution of truncated huxley theorem 3} we complete proof of the proposition.
\end{proof}

We would use the above result in the following form which can be easily deduced
by partial summation.
\begin{lemma} \label{L:Lemma3}
Let $K$ be an imaginary quadratic number field. 
For any $\vartheta$, $0<\vartheta \le \frac{1}{2}$,
 any $B>0$ and a fixed integer $h\ge 0$,
 there exists $C=C(B,h)$ such that 
 if $Q\le |A(N)|^{\vartheta} (\log N)^{-C}$, then
\begin{equation*}
\sum_{|\mathfrak{q}|\le Q}  \mu^2(\mathfrak{q}) h^{\omega(\mathfrak{q})}  \varepsilon^*(N;\mathfrak{q})  \ll_{B,K} |A(N)| (\log N)^{-B}.
\end{equation*}
\end{lemma}
\noindent

\vspace{2mm}
\noindent
For $0 < \frac{\vartheta}{2}<b\leq \frac{1}{2},$ and for $1 \le Y'\le N^b$ ( $Y'$ to be chosen later )
we define a function $\beta$ on $\OK$ by 
\[ \beta(\alpha)=\begin{cases}
                  1 & \text{ if } \alpha=w_1w_2, w_1\in \mathcal P(Y',N^b),
                                               w_2\in \mathcal P(N^b,\infty)\\
                  0 & \text{ otherwise. }                             
                 \end{cases}
\]
For the function $\beta$, we define 
 \begin{align*}
 &\pi_\beta(N)= \sum_{\alpha\in A(N)} \beta(\alpha), \; \;  \;
\pi_{\beta,\fq}(N)  = \sum_{\substack{ \alpha\in A(N) \\ (\alpha,\fq)=1}} \beta(\alpha),
\; \; \; 
 \pi_\beta(N;\mathfrak{q},\gamma)  = \sum_{\substack{ \alpha\in A(N) \\ \alpha \equiv \gamma \pmod {\mathfrak{q}}}} \beta(\alpha)\\
& \varepsilon_\beta(N;\mathfrak{q},\gamma) = 
 \pi_\beta(N;\fq,\gamma)- \frac{1}{\phi(\fq)} \pi_{\beta,\fq} (N), \; \; \;
 \varepsilon^*_\beta(N;\fq) =  \max_{M\le N} \max_{\gamma; (\gamma,\fq)=1} |\varepsilon_\beta (M;\fq,\gamma)|.
\end{align*}

An arithmetic functions $f$ is said to have {\it level of distribution} $\vartheta$ for $0<\vartheta\le 1$ if for any $A>0$
there exists a constant $B=B(A)$ such that
\begin{equation}
\sum_{q\leq \frac{N^{\vartheta}}{(\log N)^{B}}}\max_{M\leq N} \max_{\substack{a\\ (a,q)= 1}} 
\Bigg|  \sum_{\substack{n\leq M \\ n \equiv a \pmod q}} f(n) -\frac{1}{\varphi(q)}\sum_{\substack{n\leq M \\ (n, q)=1}}f(n)\Bigg|
   \ll_{A} \, \frac{N}{(\log N)^A}.
\end{equation}

Let $\tau(n)$ be the number of divisors of a natural number $n$.
A complex valued arithmetic function $f$ is said to satisfy Siegel-Walfisz condition if there exist positive constant $C$ such that
\begin{align}\label{siegel walfisz}
f(n)=O\left(\tau(n)^C\right) \quad 
\text{and} \quad \sum_{n\leq x}f(n)\chi(n)=O\bigg(\frac{x}{(\log x)^{3D}}\bigg),
\end{align}
holds for all $D>0$ and for any non-principal Dirichlet character $\chi\pmod q$ with $q\ll (\log x)^{D}.$ 

If arithmetic function $f$ and $g$ both satisfy \eqref{siegel walfisz}  and have level of distribution $\frac{1}{2}$ then 
Motohashi \cite{MOT} obtained that the Dirichlet convolution $f*g$ also does so. 
In \cite{DAR}, we extend Motohashi's \cite{MOT} result to arithmetic functions on imaginary quadratic number
fields. As the proof can be carried forward for any level of distribution
 $0<\vartheta\le \frac{1}{2}$, viewing $\beta$ as a Dirichlet convolution of characteristic functions of 
$\mathcal{P}(Y, N^b)$ and $\mathcal{P}(N^b, \infty)$, we get 

\begin{lemma} \label{L:Lemma4}
Let $K$ be an imaginary quadratic number field. 
For $0<\vartheta \le \frac{1}{2}$, $B>0$ and fixed integer $h\ge 0$,
 there exists $C=C(B,h)$ such that  if  
 $Q\le |A(N)|^{\vartheta} (\log N)^{-C}$, then
\begin{equation} \label{E:BVE2}
\sum_{|\mathfrak{q}|\le Q}  \mu^2(\mathfrak{q}) h^{\omega(\mathfrak{q})} \varepsilon^*_\beta(N;\mathfrak{q})   \ll_{B,K} |A(N)| (\log N)^{-B}.
\end{equation}
\end{lemma}

\section{method}
\label{method}

Now we will describe the method of proof which is a combination of methods of \cite{GGPY} and \cite{MAY}.

\vspace{2mm}
\noindent
Recall that a tuple $(\mathfrak{h}_1,\dots,\mathfrak{h}_k)\in \mathcal{O}_K^k$ is admissible if it does not cover all 
residue classes modulo $\mathfrak{p}$ for any prime ideal $\mathfrak{p}$ of 
$\mathcal{O}_K$.  
Let $D_0=\log\log\log N$, $\mathfrak{m}:=\prod_{|\mathfrak{p}|<D_0}\mathfrak{p}$.
Since $(\mathfrak{h}_1,\dots,\mathfrak{h}_k)\in \mathcal{O}_K^k$ is admissible, there
exists $v_0$ modulo $\mathfrak{m}$ 
such that each $\alpha+\fh_i$ lies in $\left(\OK/\mathfrak{m}\right)^\times$ for all
$j=1,\cdots, k$.
The main objects of consideration are the sums
\[
S_1 := 
\sum_{\begin{subarray}{c} \alpha \in A(N) \\ \alpha \equiv v_0\pmod{\mathfrak{m}} \end{subarray}} 
\bigg(\sum_{\begin{subarray}{c} \fd_1,\dots,\fd_k: \\ \fd_i \mid (\alpha+\mathfrak{h}_i)\,\forall i \end{subarray}} 
\lambda_{\fd_1,\dots,\fd_k} \bigg)^2
\]
and 
\begin{align} \label{eqn:S_2}
S_2:= 
\sum_{\begin{subarray}{c} \alpha \in A(N) \\ \alpha \equiv v_0\pmod{\mathfrak{m}} \end{subarray}} 
\bigg( \sum_{i=1}^k \beta(\alpha+\mathfrak{h}_i) \bigg) \bigg(\sum_{\begin{subarray}{c} 
\fd_1,\dots,\fd_k: \\ \fd_i \mid (\alpha+\mathfrak{h}_i) \,\forall i \end{subarray}} 
\lambda_{\fd_1,\dots,\fd_k} \bigg)^2,
\end{align}
where the inner sum is a $k$-fold sum over integral ideals and
$\lambda_{\fd_1,\dots,\fd_k}$ are suitably chosen weights to be made explicit later.

\vspace{2mm}
\noindent
Since each summand is non-negative, if we can show that $S_2 > \rho S_1$ 
for some positive $\rho$, then there must be at least one $\alpha\in A(N)$  
such that among $\alpha+\fh_1,\dots,\alpha+\fh_k$ atleast $[\rho]+1$ are
$G_2^K$-numbers.  
We choose the weights $\lambda_{\fd_1,\dots,\fd_k}$ in such a way that 
$\lambda_{\fd_1,\dots,\fd_k}=0$ 
unless $(\fd_i,\mathfrak{m})=1$, $\fd_i$ is square-free, and 
$|\fd_1\cdots \fd_k|\leq R$ 
for each $i=1,\cdots, k$,
where $R$ will be chosen later to be a small power of $N$.  
The main result of this section is the following.

\begin{proposition}
\label{prop:main}
Let $K$ be an imaginary quadratic number field.
Suppose that the primes $\mathcal{P}$ and $G_2^K$-numbers have a common level of distribution 
$0<\vartheta \leq 1$, and set $R=N^{\vartheta}\left(\log N\right)^{-C}$ for some 
constant $C>0$.  
For  a given a piecewise differentiable function $F\colon [0,1]^k \to \mathbb{R}$ supported on the 
simplex $\mathcal{R}_k:=\{(x_1,\dots,x_k)\in[0,1]^k: x_1+\dots+x_k \leq 1\}$, 
we set
\[
\lambda_{\fd_1,\dots,\fd_k} := 
\left(\prod_{i=1}^k \mu(\fd_i)|\fd_i|\right) 
\sum_{\substack{\mathfrak{r}_1,\dots,\mathfrak{r}_k \\ 
\mathfrak{d}_i\mid\mathfrak{r}_i\,\forall i \\ (\mathfrak{r}_i,\mathfrak{m})=1 \,\forall i }} 
\frac{\mu(\mathfrak{r}_1\dots\mathfrak{r}_k)^2}{\prod_{i=1}^k \varphi(\mathfrak{r_i})} 
F\left( \frac{\log|\mathfrak{r}_1|}{\log R},\dots, \frac{\log |\mathfrak{r}_k|}{\log R}\right)
\]
whenever $|\mathfrak{d}_1\dots\mathfrak{d}_k|<R$ and 
$(\mathfrak{d}_1\dots\mathfrak{d}_k,\mathfrak{m})=1$,
and $\lambda_{\mathfrak{d}_1,\dots,\mathfrak{d}_k}=0$ otherwise.\\
Then
\[
S_1 = (1+o(1))\frac{\varphi(\mathfrak{m})^k |A(N)| (c_K \log R)^k}{|\mathfrak{m}|^{k+1}}\widetilde{I}_{1k}(F)
\]
and
\[
S_2 = (1+o(1))\frac{ m_K \varphi(\mathfrak{m})^k |P(N)| (c_K \log R)^{k+1}}{|\mathfrak{m}|^{k+1}} \sum_{m=1}^{k}\left(\widetilde{I}_{2k}^{(m)}(F)+\widetilde{I}_{3k}^{(m)}(F)\right)
\]
where 
$0<\eta \le \frac{\vartheta}{2}$,  $m_K=\frac{\omega_K}{h_K}$ is the Mitsui's constant,
\[
\widetilde{I}_{1k}(F) := \idotsint_{\mathcal{R}_k} F(x_1,\dots,x_k)^2 \,dx_1 \dots dx_k,
\]
\begin{align*}
& \widetilde{I}_{2k}^{(m)}(F):= \\
& \left(\int_{1}^{B/2} \frac{B}{y(B-y)}dy\right)
\left(\idotsint_{\mathcal{R}_{k-1}} 
\left( \int_0^{T_m} F(x_1,\dots,x_k) \, dx_m\right)^2 dx_1 \dots dx_{m-1}
dx_{m+1}\dots dx_k \right)
\end{align*}
and 
\[
\widetilde{I}_{3k}^{(m)}(F)= \int_{B\eta}^1 \frac{B}{y(B-y)}
\idotsint_{\mathcal{R}_{k-1}} \left( \int_0^{T_m(y)} F(x_1,\dots,x_k) \, dx_m\right)^2 dx_1 \dots dx_{m-1}\dots dx_k dy
\]
with $B=2/{\vartheta}$, $T_m=1-x_1-\ldots -x_{m-1}-x_{m+1}-\ldots-x_k$ and 
$T_m(y)=\min (y, T_m)$.
\end{proposition}

\section{Preparations}
\label{preparations}

\noindent
The sum $S_1$ has been  calculated in  \cite [Proposition $2.1$]{CAS}. So we would only
work with $S_2$.
 By squaring innermost sum and interchanging summation from equation (\ref{eqn:S_2}) we can write $S_2$ as

\begin{align}\label{ali:S_2}
S_2:= \sum_{m=1}^{k}S_{2m}=\sum_{m=1}^{k} \sum_{\ul\fa,\ul\fb}\lambda_{\ul\fa}\lambda_{\ul\fb}
 \sum_{\substack{\alpha\in A(N) \\ \alpha\equiv v_0(\mathfrak m)\\ [\fa_j,\fb_j]|(\alpha+\fh_j) \forall j}}
 \beta(\alpha+\fh_m).
 \end{align}

\noindent
We note that $[\fa_i,\fb_i]$ and $[\fa_j,\fb_j]$ are relatively coprime for $i\neq j$
since the primes dividing $\fh_i-\fh_j$ also divides $\mathfrak{m}$. 

\vspace{2mm}
\noindent
If $\beta(\alpha+\fh_m)=1$ then $\alpha +\fh_m=w_1w_2$ with 
$w_1\in \mathcal P(Y',N^b), w_2\in \mathcal P(N^b,\infty)$
where $Y'$ and $N^b$ are as in the definition of $\beta$.
So the norm of $w_2$, $|w_2|=|\sigma(w_2)|^2>N^{2b}>N^{\vartheta}> R$ 
by our choice of $R$ and $b$.  
Hence $\alpha+\fh_m$ has exactly one prime divisor $w_1$ with $|w_1|\leq N^{2b}$.
Since $|\ul\fa|\leq R$, $|\ul\fb|\leq R$ and $\ul\fa, \ul\fb$ are square-free then all prime divisors of $[\ul\fa, \ul\fb]$ 
has norm $\leq R.$

\vspace{2mm}
\noindent
Hence we conclude that either $[\fa_m,\fb_m]=1$ or $[\fa_m,\fb_m]=(w_1).$ Before discussing either of these cases we need the following lemma.
\begin{lemma}\label{lem:shifting error} 
For any function 
$f:\mathcal O_K \rightarrow \mathbb C$ with $|f|\le 1$,
\[ \sum_{\substack{\alpha\in A(N) \\
	   \alpha\equiv \alpha_0 (\mathfrak q)}}
	   f(\alpha+\fh)
	   = \sum_{\substack{\alpha\in A(N) \\
	   \alpha\equiv (\alpha_0+\fh) (\mathfrak q)}}
	   f(\alpha)
	   + O\left( 1+ \left(\frac{|A(N)|}{|\mathfrak q|}
\right)^{1/2}\right).	   
	\]
\end{lemma}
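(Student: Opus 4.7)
The plan is to reduce the claim to a geometric lattice-point count on a thin annular shell. In the left-hand sum, I substitute $\gamma := \alpha + \fh$; this converts the range $\alpha\in A(N)$ to $\gamma\in A(N)+\fh$, and the congruence $\alpha\equiv\alpha_0\pmod{\fq}$ to $\gamma\equiv\alpha_0+\fh\pmod{\fq}$. Subtracting the right-hand side (where $\gamma$ instead runs over $A(N)$ with that same shifted congruence) produces a signed sum of $f$ supported on the symmetric difference $A(N)\,\Delta\,(A(N)+\fh)$. The hypothesis $|f|\le 1$ then reduces the lemma to showing
\[
\#\bigl\{\gamma\in A(N)\,\Delta\,(A(N)+\fh):\gamma\equiv\alpha_0+\fh\pmod{\fq}\bigr\}\ \ll\ 1+\Bigl(\frac{|A(N)|}{|\fq|}\Bigr)^{1/2}.
\]

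Next I would estimate this count geometrically, using that $\fh$ is a bounded shift (as in every application of the lemma in this paper). Under the complex embedding, $A(N)$ is the annular region $\{z\in\OK:N<|z|\le 2N\}$ in $\mathbb{C}\simeq\mathbb{R}^2$, so $A(N)\,\Delta\,(A(N)+\fh)$ is contained in a tubular neighbourhood of the two boundary circles of width $O(|\sigma(\fh)|)$; this tube has area $O(N)$ and perimeter $O(N)$. The congruence restricts $\gamma$ to a translate of the sublattice $\fq\subseteq\OK$, whose covolume is $\asymp|\fq|$ and whose shortest nonzero vector has length $\asymp|\fq|^{1/2}$. Applying the same Lipschitz/Davenport-type lattice-in-region estimate that underlies the boundary bound $|\partial A(N,\fq)|\ll 1+(|A(N)|/|\fq|)^{1-1/d}$ quoted earlier in the excerpt, the count is
\[
\ll\ 1+\frac{\mathrm{perimeter}}{|\fq|^{1/2}}\ \ll\ 1+\frac{N}{|\fq|^{1/2}}\ \ll\ 1+\Bigl(\frac{|A(N)|}{|\fq|}\Bigr)^{1/2},
\]
since $|A(N)|\asymp N^2$ for imaginary quadratic $K$.

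The only point that requires care is that the shell has \emph{too much} boundary relative to its volume, so that the naive main term $\mathrm{area}/|\fq|$ is dominated by the boundary correction; one must quote the geometric count in the form that produces a $\mathrm{perimeter}/|\fq|^{1/2}$ error, rather than an $\mathrm{area}/|\fq|$ main term plus an additive $(|A(N)|/|\fq|)^{1-1/d}$ correction as would be natural for $A(N)$ itself. Once this is done, the estimate is uniform in the bounded shift $\fh$ (which enters only through the tube width and drops out of the final bound, the perimeter being dictated by $N$ alone), and the rest is bookkeeping.
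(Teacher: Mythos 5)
Your reduction step — substitute $\gamma=\alpha+\fh$, subtract, and isolate the count over the symmetric difference $A(N)\,\Delta\,(A(N)+\fh)$ — matches the paper's; indeed you are slightly more careful, since the paper at that point writes the error set as $(A(N)+\fh)\setminus A(N)$ rather than the full symmetric difference (a harmless slip, as both halves are treated identically). Where you diverge is in how that thin-shell count is estimated. The paper recasts it as a \emph{difference of two full counts}, one over $A(N)+\fh$ and one over $A(N)$; it then applies, to each, the already-quoted estimate from \cite[page 4]{CAS} that the number of $\alpha\in A(N)$ in a fixed residue class mod $\fq$ is $|A(N)|/|\fq|+O\bigl(1+(|A(N)|/|\fq|)^{1-1/d}\bigr)$, notes that the two main terms $|A(N)|/|\fq|$ cancel exactly (the regions are translates of the same size), and is left with twice the quoted error, which is the claimed bound with $d=2$. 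You instead count lattice points directly in the $O(1)$-thick annular tube around $\partial A(N)$, appealing to a perimeter-form Davenport/Lipschitz principle $\ll 1+\mathrm{perimeter}/\lambda_1(\fq)$ together with the fact that $\lambda_1(\fq)\asymp|\fq|^{1/2}$. Both give the right answer, and your geometric picture is correct — including the observation that the shell's boundary term dominates its area term, which the paper sidesteps via cancellation. The paper's route is more economical in context because it reuses the quoted boundary bound as a black box (translation-invariance of that bound being tacit but obvious), whereas yours imports a lattice-point lemma not stated in the paper and requires a small extra remark that the tube, though not convex, can be covered by $O(N/\lambda_1)$ balls of radius $\lambda_1$. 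Your route has the advantage of being self-contained and of making transparent \emph{why} the bound has a square-root shape. One point worth making explicit in your write-up: the lower bound $\lambda_1(\fq)\gg|\fq|^{1/2}$ holds because any nonzero $\gamma\in\fq$ has $|\sigma(\gamma)|^2=|N(\gamma)|\ge N(\fq)=|\fq|$ in an imaginary quadratic field; the matching upper bound is Minkowski. Without this, a very skewed lattice could ruin the perimeter bound, so it is not a throwaway remark.
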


\begin{proof}
Putting $\alpha'=\alpha+\fh$ and 
$\alpha_0'=\alpha_0+\fh$ in the L.H.S, we get
\[
\sum_{\substack{\alpha'\in A(N)+\fh \\
	   \alpha'\equiv \alpha_0' (\mathfrak q)}}
	   f(\alpha').
\]
Since $|f|\le 1$, we get
\[
\sum_{\substack{\alpha'\in A(N)+\fh \\
	   \alpha'\equiv \alpha_0' (\mathfrak q)}}
	   f(\alpha')
=\sum_{\substack{\alpha'\in A(N) \\
	   \alpha'\equiv \alpha_0' (\mathfrak q)}}
	   f(\alpha')  
+O\bigg( 	 \sum_{\substack{\alpha'\in A(N)+\fh
\setminus A(N)\\
	   \alpha'\equiv \alpha_0' (\mathfrak q)}}
	   1 \bigg).
\]
\noindent
Now the $O$-term is
\begin{align*}
	 \sum_{\substack{\alpha'\in A(N)+\fh
\setminus A(N)\\
	   \alpha'\equiv \alpha_0' (\mathfrak q)}}
	   1
&=
 \sum_{\substack{\alpha'\in A(N)+\fh\\
	   \alpha'\equiv \alpha_0' (\mathfrak q)}}
	   1
- \sum_{\substack{\alpha'\in A(N)\\
	   \alpha'\equiv \alpha_0' (\mathfrak q)}}
	   1 
= \frac{|A(N)+\fh|}{|\mathfrak q|}-
   \frac{|A(N)|}{|\mathfrak q|}\\
   & +O\left(\partial(A(N)+\fh,\mathfrak q)\right)
   +O\left(\partial(A(N),\mathfrak q)\right) \ll 1+\left(\frac{|A(N)|}{|\mathfrak q|}
\right)^{1/2}.
\end{align*}
\end{proof}
\noindent

\subsection{}We first suppose that$[\fa_m,\fb_m]=1$.
Replacing $\alpha+\fh_m$ by $\alpha$, the condition $[\fa_j,\fb_j]|(\alpha+\fh_j)$ of 
the inner sum becomes $\alpha \equiv (\fh_m-\fh_j) \text{ modulo } [\fa_j,\fb_j] $ for all $j\neq m$. 
Since $[\fa_j,\fb_j]$ is coprime of $\mathfrak{m}$ for all $j$, by
Chinese remainder theorem, these $k-1$ congruence equations have a common solution $\alpha_0$
 $\pmod{\mathfrak{m}\prod_{j=1}^k [\fa_j,\fb_j]}$ 
where the last product remains unchanged by excluding or including
the index $j=m$ ( as $[\fa_m,\fb_m]=1$).
Using Lemma \ref{lem:shifting error} with $f=\beta$, we get
\[
\sum_{\substack{\alpha\in A(N) \\
	   \alpha\equiv \alpha_0 (\mathfrak q)}}
	   \beta(\alpha+\fh_m)
	   = \sum_{\substack{\alpha\in A(N) \\
	   \alpha\equiv (\alpha_0+\fh_m) (\mathfrak q)}}
	   \beta(\alpha)
	   + O\left( \left(\frac{|A(N)|}{|\mathfrak q|}
\right)^{1/2}\right)
\]
where $\mathfrak q=\mathfrak m \prod_{j=1}^k 
[\mathfrak a_j, \mathfrak b_j]$.
Using this we have 
\begin{align*}
\sum_{\substack{\alpha\in A(N) \\ \alpha\equiv v_0(\mathfrak m)\\ [\fa_j,\fb_j]|(\alpha+\fh_j) \forall j}} \beta(\alpha+\fh_m)=& \sum_{\substack{ \alpha\in A(N) \\ \alpha\equiv \alpha_0' (\mathfrak{q})}}
\beta(\alpha)+O\left( \left(\frac{|A(N)|}{|\mathfrak q|}
\right)^{1/2}\right)\\
=& \frac{1}{\phi(\mathfrak{q})}
\sum_{\substack{ \alpha\in A(N) \\ (\alpha, \mathfrak{q})=1}}\beta(\alpha)
+\mathcal{E}_{\beta}(N, \mathfrak{q}, \alpha_0')
+O\left(\left(\frac{|A(N)|}{|\mathfrak{q}|}\right)^{1/2}\right)\; 
\end{align*}
where $\alpha_{0}^{'}=\alpha_{0}+\fh_m.$

 \vspace{2mm}
\subsection{}                                      
Now assume that $[\fa_m,\fb_m]=(w_1).$ In this case $w_1\in \mathcal{P}(Y', R')$ 
with $R'=R^{1/2}$ because of the support of $\lambda_{\ul\fa}$ and $\lambda_{\ul\fb}$.
Let $\widetilde{w_1}$ be the inverse of $w_1\pmod{\mathfrak{q}/(w_1)}.$
Similarly as above
\begin{align*}
\sum_{\substack{\alpha\in A(N) \\ \alpha\equiv v_0(\mathfrak m)\\ [\fa_j,\fb_j]|(\alpha+\fh_j) \forall j}}
 \beta(\alpha+\fh_m)=\sum_{\substack{\alpha-\fh_m\in A(N) \\ \alpha\equiv \alpha_{o}\left(\mathfrak q/(w_1)\right)}}\beta(\alpha)&=\sum_{\substack{\alpha\in A(N) \\ \alpha\equiv \alpha_{o}\left(\mathfrak q/(w_1)\right)}}\beta(\alpha)-\sum_{\substack{\alpha\in (A(N)+\fh_m)\setminus A(N) \\ \alpha\equiv \alpha_{o}\left(\mathfrak q/(w_1)\right)}}\beta(\alpha)\\
&= \sum_{\substack{\alpha\in A(N) \\ \alpha\equiv \alpha_{o}\left(\mathfrak q/(w_1)\right)}}\beta(\alpha)+O\left( \left(\frac{|A(N)|}{|\mathfrak q|}
\right)^{1/2}\right).
\end{align*}
Now $\alpha\in A(N)$ and $\alpha=w_{1}w_{2}.$   
So we separate  the above sum with respect to primes $w_1$ and $w_2.$ 
We note that 
$w_1w_2\in A(N)$ if and only if $w_2\in A(N/|w_1|^{1/2})$.
Therefore in this case, we have 
 \begin{align*}
 \sum_{\substack{\alpha\in A(N) \\ \alpha\equiv \alpha_{o}\left(\mathfrak q/(w_1)\right)}}\beta(\alpha)=& \sum_{\substack{w_2\in A\left(\frac{N}{|w_{1}|^{1/2}}\right)
\cap \mathcal{P}\\ w_2\equiv \alpha_{o}\widetilde{w_1}\left(\mathfrak {q}/(w_1)\right)}}1 
+ O\left(\left(\frac{|A(N)|}{|\mathfrak{q}|}\right)^{1/2}\right)\\
=&\frac{\pi^{\flat}\left(\frac{N}{|w_{1}|^{1/2}}\right)}{\phi\left(\mathfrak {q}/(w_1)\right)}+\varepsilon\left(\frac{N}{|w_1|^{1/2}},\mathfrak {q}/(w_1),\alpha_{o}\widetilde{w_1}\right)+ O\left(\left(\frac{|A(N)|}{|\mathfrak{q}|}\right)^{1/2}\right).
 \end{align*}
\noindent
For each $\mathfrak{q}$, the number of ways of choosing 
$\mathfrak{a}_1, \dots, \mathfrak{a}_k$ and $\mathfrak{b}_1, \dots, \mathfrak{b}_k$ so that 
$$\mathfrak{m}\prod_{j=1}^{k} [\mathfrak{a}_j, \mathfrak{b}_j] = \mathfrak{q}$$ 
is at most $\tau_{3k}(\mathfrak{q}).$ 
Therefore for each $1\leq m\leq k,$ from equation (\ref{ali:S_2}), the sum $S_{2m}$ can be written as
\begin{align*}
S_{2m}=&\sum_{w_1\in \mathcal P_1^0(Y',N^{b})}\pi^{\flat}\left(\frac{N}{|w_1|^{1/2}}\right)
\sum_{\substack{\ul\fa, \ul\fb \\ [\fa_{m},\fb_{m}]=1\\ \left(w_1, \mathfrak{q}\right)=1}} 
\frac{\lambda_{\ul\fa}\lambda_{\ul\fb}}{\phi\left(\mathfrak m \prod_{j\neq m}[\fa_j,\fb_j]\right)}\\
+&\sum_{w_1\in \mathcal P_1^0(Y', R')}\pi^{\flat}\left(\frac{N}{|w_1|^{1/2}}\right)
\sum_{\substack{\ul\fa, \ul\fb \\ [\fa_{m},\fb_{m}]=(w_1) }} 
\frac{\lambda_{\ul\fa}\lambda_{\ul\fb}}{\phi\left(\mathfrak m \prod_{j\neq m}[\fa_j,\fb_j]\right)}\\
+& O\bigg(\lambda_{\max}^2 \left(\frac{|A(N)|}{|\mathfrak{m}|}\right)^{1/2}
\sum_{\substack{\mathfrak{a}_1,\dots,\mathfrak{a}_k \\ \mathfrak{b}_1,\dots,\mathfrak{b}_k}}  
\frac{1}{\prod_{j=1}^{k} |[\mathfrak{a}_j, \mathfrak{b}_j]|^{\frac{1}{2}}}\bigg)
+O\bigg(\lambda_{\max}^2\sum_{\substack{|\mathfrak{q}|<|\mathfrak{m}| R^2}}
\mu^2(\mathfrak{q})\tau_{3k}(\mathfrak{q}) \varepsilon_{\beta}^{*}\left(N, \mathfrak{q}\right)\bigg)\\
+&O\bigg(\lambda_{\max}^2\sum_{\substack{|\mathfrak{q}|<|\mathfrak{m}|R^2}}\mu^2(\mathfrak{q})
\tau_{3k}(\mathfrak{q})\sum_{w_1\mid \mathfrak{q};w_1\in \mathcal P_1^0(Y', R')}
\varepsilon^{*}\left(\frac{N}{|w_1|^{1/2}},\mathfrak{q}/(w_1)\right)\bigg)
\end{align*}
where $\lambda_{\max}=\sup_{\ul\fa}|\lambda_{\ul\fa}|.$

\vspace{2mm}
\noindent
Using Lemma \ref{lem:merten's theorem on number field}, it can seen that the  first error term of the above expression of $S_{2m}$ 
is bounded above by
\begin{align*} 
 \lambda_{\max}^2 |A(N)|^{1/2} \cdot & \sum_{|\mathfrak{q}| \leq R^2\log\log N} 
 \frac{\mu^2(\mathfrak{q})\tau_{3k}(\mathfrak{q})}{|\mathfrak{q}|^{1/2}} \\
& \leq \lambda_{\max}^2 |A(N)|^{1/2} \cdot R(\log\log N)^1/2 
 \prod_{|\mathfrak{p}| \leq R^2\log\log N} \left(1+\frac{3k}{|\mathfrak{p}|}\right) \\ 
& \ll \lambda_{\max}^2 |A(N)|^{1/2} \cdot R (\log{R})^{3k}.
\end{align*}

\noindent Lemma \ref{L:Lemma4}  gives that the second error term of $S_{2m}$ is bounded above by 
$\lambda_{\max}^2 \frac{|A(N)|}{(\log N)^B}$ for any $B>0.$

Lemma \ref{L:Lemma3} gives that  the third error term of $S_{2m}$ is bounded above by
 \begin{align*}
&\ll \lambda_{\max}^2(\log R)^{2k} \sum_{|\mathfrak{q}|\leq |\mathfrak{m}|R^2}\mu(\mathfrak{q})^2 \tau_{3k}(\mathfrak{q})\sum_{\substack{w_1\mid \mathfrak{q} \\ w_1\in \mathcal{P}_{1}^{0}(Y', R')}}\varepsilon^{*}\left(\frac{N}{|w_1|^{1/2}},\mathfrak{q}/(w_1)\right)\\
&\ll \lambda_{\max}^2(\log R)^{2k} \sum_{|w_1| \leq R}\tau_{3k}(w_1)\sum_{|\mathfrak{s}|\leq \frac{|\mathfrak{m}|R^2}{|w_1|}}\mu(\mathfrak{s})^2 \tau_{3k}(\mathfrak{s})\varepsilon^{*}\left(\frac{N}{|w_1|^{1/2}},\mathfrak{s}\right)\\
&\ll \lambda_{\max}^2 \sum_{|w_1|\leq R}\frac{|A(N)|}{|w_1|\log \left(N/|(w_1)|\right)}
\ll \lambda_{\max}^2 |A(N)|.
\end{align*}
 Combining these estimations of error terms we get the following lemma.

 \begin{lemma}\label{lem:sum S_2m}
  Let $S_{2m}$ be defined as in (\ref{ali:S_2}). Then with the  hypothesis of Proposition \ref{prop:main} we have
  \begin{align*}
S_{2m}=&\sum_{w_1\in \mathcal P_1^0(Y',N^{b})}\pi^{\flat}\left(\frac{N}{|w_1|^{1/2}}\right)\sum_{\substack{\ul\fa, \ul\fb \\ [\fa_{m},\fb_{m}]=1 \\ \left(w_1, \mathfrak{m}\prod_{j=1}^{k}[\fa_{j}, \fb_{j}]\right)=1}} \frac{\lambda_{\ul\fa}\lambda_{\ul\fb}}{\phi\left(\mathfrak m \prod_{j\neq m}[\fa_j,\fb_j]\right)}\\
&+\sum_{w_1\in \mathcal P_1^0(Y', R')}\pi^{\flat}\left(\frac{N}{|w_1|^{1/2}}\right)\sum_{\substack{\ul\fa, \ul\fb \\ [\fa_{m},\fb_{m}]=(w_1) }} \frac{\lambda_{\ul\fa}\lambda_{\ul\fb}}{\phi\left(\mathfrak m \prod_{j\neq m}[\fa_j,\fb_j]\right)} +O \left(\lambda_{\max}^2 |A(N)|\right).
\end{align*}
   \end{lemma}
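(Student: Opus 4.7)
The plan is to assemble the pieces already sketched in the discussion preceding the statement into a single clean argument. Starting from the decomposition $S_2=\sum_{m=1}^{k}S_{2m}$ in \eqref{ali:S_2}, I would fix $m$ and focus on the inner sum, where the condition $\beta(\alpha+\fh_m)=1$ forces $\alpha+\fh_m=w_1w_2$ with $w_1\in\mathcal{P}^0_b(Y',N)$ and $w_2\in\mathcal{P}^0_1(N^b,\infty)$. Since $|w_2|>N^{2b}\ge R$ whereas every prime divisor of $[\ul\fa,\ul\fb]$ has norm $\le R$, the ideal $[\fa_m,\fb_m]$ must equal either the trivial ideal or $(w_1)$. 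This produces the dichotomy that drives the whole computation.

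In the case $[\fa_m,\fb_m]=1$, I would use the Chinese Remainder Theorem to collapse the $k-1$ congruence conditions $[\fa_j,\fb_j]\mid(\alpha+\fh_j)$, $j\ne m$, together with $\alpha\equiv v_0\pmod{\mathfrak m}$, into a single condition $\alpha\equiv\alpha_0\pmod{\fq}$ where $\fq=\mathfrak m\prod_{j=1}^{k}[\fa_j,\fb_j]$. Then Lemma \ref{lem:shifting error} applied with $f=\beta$ shifts the sum to one of the form $\pi_\beta(N;\fq,\alpha_0+\fh_m)$, and the definition of $\varepsilon_\beta$ expresses this as $\phi(\fq)^{-1}\pi_{\beta,\fq}(N)+\varepsilon_\beta(N;\fq,\cdot)$ plus a shifting error $O(|A(N)|^{1/2}|\fq|^{-1/2})$. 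In the case $[\fa_m,\fb_m]=(w_1)$, I would first separate the fixed prime $w_1$ (which by the size constraint lies in $\mathcal{P}^0_1(Y',R')$ with $R'=R^{1/2}$), invert $w_1$ modulo $\fq/(w_1)$, and use the identity $|\sigma(w_1)|=|w_1|^{1/2}$ to rescale the inner variable $w_2\in A(N/|w_1|^{1/2})$. This rewrites the inner sum as $\pi^\flat(N/|w_1|^{1/2};\fq/(w_1),\alpha_0\widetilde{w_1})$, which in turn decomposes into a main term and the error $\varepsilon(N/|w_1|^{1/2},\fq/(w_1),\cdot)$, plus the same shifting loss.

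Collecting the contributions produces the two main terms displayed in the lemma, together with four error contributions: the shifting error, the divisor-sum error coming from $\tau_{3k}(\fq)\le$ number of factorizations, the level-of-distribution error for $\varepsilon_\beta$, and the level-of-distribution error for $\varepsilon$. For the first, using the bound $\sum_{|\fq|\le R^2}\mu^2(\fq)\tau_{3k}(\fq)/|\fq|^{1-\nu}\ll R^{2\nu}(\log R)^{3k}$ derived from Lemma \ref{lem:merten's theorem on number field} gives the acceptable bound $\lambda_{\max}^2|A(N)|(\log R)^{3k}(|A(N)|/R^2)^{-1/2}$. For the second, Lemma \ref{L:Lemma4} with $h=3k$ immediately yields a saving of $(\log N)^{-B}$. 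The main obstacle is the third error term, where one must handle a sum over both $w_1$ and the cofactor $\mathfrak s=\fq/(w_1)$. I would swap the order of summation so that $w_1$ is outermost, apply Lemma \ref{L:Lemma3} to the inner sum over $\mathfrak s$ with $|\mathfrak s|\le |\mathfrak m|R^2/|w_1|$ to obtain a saving of $|A(N/|w_1|^{1/2})|(\log N)^{-B'}$, and then sum dyadically over $|w_1|$ using Lemma \ref{lem:merten's theorem on number field}; the $1/|w_1|$ weight makes this sum convergent and contributes at most $y_{\max}^2|A(N)|$. Finally, converting $\lambda_{\max}$ to $y_{\max}(\log R)^k$ via the inversion formula \eqref{inversion} absorbs the logarithmic factors into the $O(\lambda_{\max}^2|A(N)|)$ term, producing the stated bound.
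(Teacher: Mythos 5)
Your proposal reproduces the paper's argument essentially verbatim: the dichotomy $[\fa_m,\fb_m]\in\{1,(w_1)\}$ forced by $|w_2|>N^{2b}\ge R$, the CRT plus Lemma \ref{lem:shifting error} reduction in each case, and the same three error estimates (shifting error via the $\tau_{3k}$ divisor bound from Lemma \ref{lem:merten's theorem on number field}, $\varepsilon_\beta$ via Lemma \ref{L:Lemma4} with $h=3k$, and $\varepsilon$ via swapping the order of summation and applying Lemma \ref{L:Lemma3} at scale $N/|w_1|^{1/2}$), followed by the conversion $\lambda_{\max}\ll y_{\max}(\log R)^k$. The only cosmetic slip is that you list ``four error contributions'' but the $\tau_{3k}$ divisor bound is a multiplicity factor inside the other three, not a separate error; this does not affect the correctness of the argument.
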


\vspace{2mm}
\noindent   
 We define
\begin{align} \label{align:S_{2m}(w_1)}
S_{2m}(w_1)=\sum_{\substack{\ul\fa, \ul\fb \\ [\fa_{m},\fb_{m}]\mid(w_1) }} \frac{\lambda_{\ul\fa}\lambda_{\ul\fb}}{\phi\left(\mathfrak m \prod_{j\neq m}[\fa_j,\fb_j]\right)} .
\end{align}
 
\noindent The sum $S_{2m}(w_1)$ is estimated in the following lemma.
 
 \begin{lemma} \label{lem:S_{2m}(w_1)}
 Let $S_{2m}(w_1)$ be defined as in (\ref{lem:S_{2m}(w_1)}). For ideals $\fr_1,\ldots, \fr_k$ of $\mathcal O_K$, we define
\begin{align}\label{align:change of variable depend on primes}
 y_{\fr_1,\ldots,\fr_k}^{(m)}(w_1)
=\prod_{j\neq m} \mu(\fr_j)g(\fr_j)
\sum_{\substack{\ul\fa \\ \fr_j \mid \fa_j \forall j \\ \fa_m\mid (w_1)}} 
\frac{\lambda_{\ul\fa}}{\prod_{j\neq m}\phi(\fa_j)}
\end{align} 
where $g$ is the multiplicative function defined by $g(\mathfrak{p}) = |\mathfrak{p}|-2$ for all prime ideals $\mathfrak{p}$ of $A$.
Let $y_{\max} ^{(m)}(w_1)= \sup_{\mathfrak{r}_1, \dots, \mathfrak{r}_k} |y_{\mathfrak{r}_1, \dots, \mathfrak{r}_k}^{(m)}(w_1)|$. Then we have
\[
 S_{2m}(w_1)=\frac{1}{\phi(\mathfrak{m})}
 \sum_{\substack{\ul\fu \\ \fu_m=1}}\prod_{j\neq m} \frac{\mu^2(\fu_j)}{g(\fu_j)}
 \left(y^{(m)}_{\ul \fu}(w_1)\right)^2 +O\left((y^{(m)}_{\max}(w_1))^2
 (\log R)^{k-1}\frac{(\phi(\mathfrak{m}))^{k-2}}{|\mathfrak{m}|^{k-1}}\frac{1}{D_0}\right).\]
  \end{lemma}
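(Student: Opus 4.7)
The plan is to carry out a number-field version of the standard Maynard-Tao change of variables, restricted now to the case $[\fa_m,\fb_m]\mid (w_1)$. First, since $\lambda_{\ul\fa}$ is supported on $\ul\fa$ with $(\fa_j,\mathfrak m)=1$ for every $j$, multiplicativity of $\phi$ gives
\[
\phi\Big(\mathfrak m \prod_{j\neq m}[\fa_j,\fb_j]\Big)=\phi(\mathfrak m)\prod_{j\neq m}\phi([\fa_j,\fb_j]),
\]
so the factor $1/\phi(\mathfrak m)$ pulls outside. For each $j\neq m$ I would then apply the squarefree identity
\[
\frac{1}{\phi([\fa_j,\fb_j])}=\frac{\phi((\fa_j,\fb_j))}{\phi(\fa_j)\phi(\fb_j)}=\frac{1}{\phi(\fa_j)\phi(\fb_j)}\sum_{\fu_j\mid(\fa_j,\fb_j)} g(\fu_j),
\]
where the second equality uses $\phi(\fc)=\sum_{\fu\mid\fc}g(\fu)$ for squarefree $\fc$, which follows from $g(\fp)+1=|\fp|-1=\phi(\fp)$ by multiplicativity.

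Next I would swap the order of summation, putting $\ul\fu=(\fu_1,\dots,\fu_k)$ with $\fu_m=1$ outermost. The constraints $\fu_j\mid(\fa_j,\fb_j)$ become independent divisibilities $\fu_j\mid\fa_j$ and $\fu_j\mid\fb_j$, so the double sum over $\ul\fa,\ul\fb$ splits into a product of two identical one-variable sums. By definition \eqref{align:change of variable depend on primes}, each such sum equals $y^{(m)}_{\ul\fu}(w_1)\big/\prod_{j\neq m}\mu(\fu_j)g(\fu_j)$. Squaring, together with the automatic restriction to squarefree $\fu_j$ (which absorbs the $\mu^2(\fu_j)$), yields the putative main term
\[
\frac{1}{\phi(\mathfrak m)}\sum_{\substack{\ul\fu\\ \fu_m=1}}\prod_{j\neq m}\frac{\mu^2(\fu_j)}{g(\fu_j)}\bigl(y^{(m)}_{\ul\fu}(w_1)\bigr)^2.
\]

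The main obstacle is to justify dropping the pairwise-coprimality of the $\fu_j$ ($j\neq m$): the clean factorization above is valid only when the $\fu_j$'s are pairwise coprime, since otherwise the implicit requirement that the $\fa_j$'s be pairwise coprime (forced by the support of $\lambda_{\ul\fa}$ via coprimality with $\mathfrak m$) would be violated. To control the discrepancy, observe that any prime $\fp$ simultaneously dividing $\fu_i$ and $\fu_{j'}$ for two distinct indices $i,j'\neq m$ must satisfy $|\fp|\geq D_0$, because $\fu_i\mid\fa_i$ and $(\fa_i,\mathfrak m)=1$. Extracting such a $\fp$ produces a weight $g(\fp)^{-2}$, and summing over $|\fp|\geq D_0$ using Lemma \ref{lem:merten's theorem on number field} gives a saving of $D_0^{-1}$. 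The remaining $k-3$ free sums over $\fu_\ell$ together with the two partial sums in $\fu_i,\fu_{j'}/\fp$ each contribute $O(\log R)$ (with the standard factor $|\mathfrak m|/\phi(\mathfrak m)$ per sum coming from restricting to ideals coprime to $\mathfrak m$), yielding in total the claimed error term
\[
O\!\left((y^{(m)}_{\max}(w_1))^2 (\log R)^{k-1}\,\frac{\phi(\mathfrak m)^{k-2}}{|\mathfrak m|^{k-1}}\,\frac{1}{D_0}\right).
\]
This parallels the computation of $S_1$ in \cite[Proposition 2.1]{CAS}, with the only substantive new feature being the constraint $\fa_m\mid(w_1)$ in the $m$-th coordinate, which is accommodated simply by not applying the $\sum_{\fu_m}g(\fu_m)$ identity when $j=m$.
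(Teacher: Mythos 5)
Your skeleton — multiplicativity of $\phi$, the squarefree identity $\phi([\fa_j,\fb_j])^{-1}=\phi(\fa_j)^{-1}\phi(\fb_j)^{-1}\sum_{\fu_j\mid(\fa_j,\fb_j)}g(\fu_j)$, the interchange of summation, and the identification of the inner one-variable sum with $y^{(m)}_{\ul\fu}(w_1)/\prod_{j\neq m}\mu(\fu_j)g(\fu_j)$ — matches the paper. But the step where you assert that ``the double sum over $\ul\fa,\ul\fb$ splits into a product of two identical one-variable sums'' is not justified, and your subsequent error analysis targets the wrong constraint. The support of $\lambda$ forces $(\fa_i,\fa_j)=1$ and $(\fb_i,\fb_j)=1$ for $i\neq j$, but it does \emph{not} force $(\fa_i,\fb_j)=1$ for $i\neq j$; that cross-constraint is nonetheless implicitly present in $S_{2m}(w_1)$ (inherited from the original $\alpha$-sum, where pairwise coprimality of the $[\fa_i,\fb_i]$ is needed for the congruences to be compatible). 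This constraint couples $\ul\fa$ with $\ul\fb$, so the double sum does not factor. Meanwhile, your ``main obstacle'' — dropping pairwise coprimality of the $\fu_j$ — is vacuous: each $\fu_j$ divides $\fa_j$ and the $\fa_j$ are already pairwise coprime from $\lambda$'s support, so $y^{(m)}_{\ul\fu}(w_1)$ itself vanishes unless the $\fu_j$ are pairwise coprime. There is nothing there to drop, and the $D_0^{-1}$ you ``save'' from that step is not the actual source of the error.

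What the paper does instead is remove the cross-constraint $(\fa_i,\fb_j)=1$ ($i\neq j$) by M\"obius inversion, introducing auxiliary ideals $\fs_{i,j}$, and then change variables to $\fc_j=\fu_j\prod_{i\neq j}\fs_{i,j}$, $\fd_j=\fu_j\prod_{i\neq j}\fs_{j,i}$, so that the inner sum becomes $y^{(m)}_{\fc_1,\ldots,\fc_k}(w_1)\,y^{(m)}_{\fd_1,\ldots,\fd_k}(w_1)$ weighted by $\prod_{i\neq j}\mu(\fs_{i,j})/g(\fs_{i,j})^2$. The main term is the diagonal $\fs_{i,j}=1$ for all $i\neq j$; the error comes from nontrivial $\fs_{i,j}$, which necessarily have norm $>D_0$ because they divide $[\fa_i,\fb_i]$ and hence are coprime to $\mathfrak m$. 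Summing $\mu^2(\fs_{i,j})/g(\fs_{i,j})^2$ over $|\fs_{i,j}|>D_0$ gives the $1/D_0$ saving, while the $\ul\fu$-sum contributes $(\log R)^{k-1}\bigl(\phi(\mathfrak m)/|\mathfrak m|\bigr)^{k-1}$; combined with the $1/\phi(\mathfrak m)$ prefactor this gives the stated bound. Your final error expression happens to agree numerically, but the route to it skips the real difficulty. You should redo the error analysis with the M\"obius inversion over the $(\fa_i,\fb_j)$ cross-constraints.
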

  \begin{proof}
 From the definition of $g$ it follows that
$$
\frac{1}{\varphi([\mathfrak{a}_i, \mathfrak{b}_i])} 
= \frac{1}{\varphi(\mathfrak{a}_i) \varphi(\mathfrak{b}_i)} 
\sum_{\mathfrak{u}_i \mid (\mathfrak{a}_i, \mathfrak{b}_i)} g(\mathfrak{u}_i).
$$
From equation (\ref{align:S_{2m}(w_1)}) we get,
\begin{align*}
&S_{2m}(w_1)=\sum_{\substack{\ul\fa, \ul\fb \\ [\fa_{m},\fb_{m}]\mid(w_1) }} \frac{\lambda_{\ul\fa}\lambda_{\ul\fb}}{\phi\left(\mathfrak m \right)}\prod_{j\neq m}\frac{1}{\phi(\fa_j)\phi(\fb_j)}\sum_{\mathfrak{u_j}\vert (\fa_j,\fb_j)}g(\mathfrak{u_j})\\
&=\frac{1}{\phi(\mathfrak{m})}
\sum_{\substack{\ul\fu \\ \fu_m=1}}\prod_{j\neq m}g(\fu_j)
\sum_{\substack{\ul\fa, \ul\fb \\ [\fa_{m},\fb_{m}]\mid(w_1)\\ \fu_j\vert (\fa_j, \fb_j)\forall j}} 
\frac{\lambda_{\ul\fa}\lambda_{\ul\fb}}{ \prod_{j\neq m}\phi(\fa_j)\phi(\fb_j)}.
\end{align*}
Note that $\lambda_{\fa_1, \ldots, \fa_k}$ is supported on ideals $\fa_1, \ldots, \fa_k$ with 
$(\fa_i, \mathfrak m)=1$ for each $i$ and $(\fa_i, \fa_j)=1 \, \forall i\neq j.$ 
Thus we may drop the requirement that $\mathfrak{m}$ is coprime to each of the $[\fa_i, \fb_i]$ 
from the summation, since these terms have no contribution. Thus the only remaining restriction is that $(\fa_i, \fb_j)=1 \, \forall i\neq j.$ 
So we can remove this coprimality condition by M\"{o}bius inversion to get

\[
 S_{2m}(w_1)
 =\frac{1}{\phi(\mathfrak{m})}
\sum_{\substack{\ul\fu \\ \fu_m=1}}\prod_{j\neq m}g(\fu_j)
\sum_{\fs_{1,2},\ldots, \fs_{k,k-1}}\prod_{\substack{1\le i, j\le k \\ i\neq j}}\mu(\fs_{i,j})
\sum_{\substack{\ul\fa, \ul\fb \\ [\fa_{m},\fb_{m}]\mid(w_1)\\ 
\fu_j\vert (\fa_j, \fb_j)\forall j
\\ \fs_{i,j}|(\fa_i,\fb_j), \forall i\neq j}} 
\frac{\lambda_{\ul\fa}\lambda_{\ul\fb}}{ \prod_{j\neq m}\phi(\fa_j)\phi(\fb_j)}.
\]

\noindent
Now we make the following change of variables:
\[ 
\fc_j=\fu_j \prod_{i\neq j}\fs_{i,j} \text{ and }
\fd_j=\fu_j \prod_{i\neq j}\fs_{j,i}.
\]
\noindent
By using equation (\ref{align:change of variable depend on primes}) 
we can rewrite $S_{2m}(w_1)$ as 
\[
 S_{2m}(w_1)=\frac{1}{\phi(\mathfrak{m})}
 \sum_{\substack{\ul\fu \\ \fu_m=1}}\prod_{j\neq m} \frac{\mu^2(\fu_j)}{g(\fu_j)}
 \sum_{\fs_{1,2},...,\fs_{k,k-1}}\prod_{i\neq j}\frac{\mu(\fs_{i,j})}{g(\fs_{i,j})^2}
 y^{(m)}_{\fc_1,\ldots, \fc_k}(w_1)y^{(m)}_{\fd_1,\ldots, \fd_k}(w_1).
\]
In the above sum $\fs_{i,j}\mid ([\fa_i, \fb_i], [\fa_j, \fb_j])$, hence $\fs_{i,j}$ is coprime to $\mathfrak{m}$
for all $i\neq j$. Then either $\fs_{i,j}=1$ or $|\fs_{i,j}|>D_0$.
For a fixed $i$ and $j,$ the total contribution from the terms with $|\fs_{i,j}|> D_{o}$ is bounded above by
\[
 \frac{(y^{(m)}_{\max}(w_1))^2}{\phi(\mathfrak{m})}
 \Bigg(\sum_{\substack{|\fu|<R \\ (\fu,\mathfrak{m})=1}}\frac{\mu^2(\fu)}{g(\fu)}\Bigg)^{k-1}
 \bigg(\sum_{\fs}\frac{\mu(\fs)^2}{g(\fs)^2}\bigg)^{k(k-1)-1}
 \bigg(\sum_{|\fs_{i,j}|>D_0} \frac{\mu(\fs_{i,j})^2}{g(\fs_{i,j})^2}\bigg).
\]
From Lemma \ref{lem:merten's theorem on number field} the above quantity is bounded above by 
\[
 \frac{(y^{(m)}_{\max}(w_1))^2}{\phi(\mathfrak{m})}
 (\log R)^{k-1}\left(\frac{\phi(\mathfrak{m})}{|\mathfrak{m}|}\right)^{k-1}\frac{1}{D_0}.
\]

\noindent
The main term of $S_{2m}(w_1)$ is obtained from $\mathfrak{s}_{i,j}=1$ for all $i\neq j$
which completes the proof.
\end{proof}

For ideals $\mathfrak{r}_1,\dots,\mathfrak{r}_k$, we define
\[
y_{\mathfrak{r}_1, \dots, \mathfrak{r}_k} =
\bigg(
  \prod_{i=1}^k \mu(\mathfrak{r}_i) \varphi(\mathfrak{r}_i)
\bigg)
\sum_{\substack{\mathfrak{a}_1, \dots, \mathfrak{a}_k \\ \mathfrak{r}_i \mid \mathfrak{a}_i\,\forall i}}
  \frac{\lambda_{\mathfrak{a}_1,  \dots, \mathfrak{a}_k}}{\prod_{i=1}^k |\mathfrak{a}_i|}
\]
and $y_{\max} = \sup_{\mathfrak{r}_1, \dots, \mathfrak{r}_k} |y_{\mathfrak{r}_1, \dots, \mathfrak{r}_k}|$. 

\vspace{2mm}
\noindent
We recall the inversion formula from  \cite [Equation $(5.8)$]{MAY} that
\begin{equation}\label{inversion}
\lambda_{\fa_1,\dots,\fa_k} = 
\bigg(\prod_{i=1}^k \mu(\fa_i)|\fa_i|\bigg) 
\sum_{\substack{\mathfrak{r}_1,\dots,\mathfrak{r}_k \\ 
\mathfrak{a}_i\mid\mathfrak{r}_i\,\forall i }} 
\frac{y_{\fr_1,\ldots, \fr_k}}{\prod_{i=1}^k \varphi(\mathfrak{r_i})}.
\end{equation}
Therefore $\lambda_{\max}\ll y_{\max}(\log R)^{k}$.

\noindent
The following lemma gives a relation between
the quantities $y_{\fr_1,\ldots, \fr_k}^{(m)}(w_1)$ and
$y_{\fr_1,\ldots, \fr_k}$.

\begin{lemma}
 If $\fu_m=1$ ( trivial ideal $\OK$ ), then
 \begin{align*}
 y_{\fu_1,\ldots,\fu_k}^{(m)}(w_1)&=\sum_{\substack{\fr_m\\|\fr_m|\leq R}} 
\frac{y_{\fu_1, \ldots, \fu_{m-1},\fr_m,\fu_{m+1},\ldots, \fu_k}}{\phi(\fr_m)}-\frac{|(w_1)|}{\phi((w_1))}\sum_{\substack{\fs_m \\|\fs_m|\leq R/|w_1|}} 
\frac{y_{\fu_1, \ldots, \fu_{m-1},(w_1)\fs_m,\ldots, \fu_k}}{\phi(\fs_m)}\\
& + O\left(y_{max}\frac{\phi(\mathfrak{m})}{|\mathfrak{m}|}\frac{\log(R/D_{0})}{D_{0}}\right).
 \end{align*}
 \end{lemma}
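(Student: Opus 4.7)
The approach is to substitute the inversion formula \eqref{inversion} for $\lambda_{\ul\fa}$ into the definition \eqref{align:change of variable depend on primes} of $y^{(m)}_{\ul\fu}(w_1)$ and swap the order of summation so that $y_{\ul\fr}$ sits outside. After this rearrangement the inner sum over $\ul\fa$ factors across the $k$ coordinates, giving
\[
y^{(m)}_{\ul\fu}(w_1) \;=\; \prod_{j\neq m}\mu(\fu_j)g(\fu_j)\sum_{\ul\fr}\frac{y_{\ul\fr}}{\prod_{i}\phi(\fr_i)}\,C_m(\fr_m)\prod_{j\neq m}D_j(\fu_j,\fr_j),
\]
where $C_m(\fr_m):=\sum_{\fa_m\mid\gcd((w_1),\fr_m)}\mu(\fa_m)|\fa_m|$ captures the constraint $\fa_m\mid (w_1)$ (using $\fu_m=\mathcal{O}_K$) and $D_j(\fu_j,\fr_j):=\sum_{\fu_j\mid\fa_j\mid\fr_j}\mu(\fa_j)|\fa_j|/\phi(\fa_j)$ captures each remaining coordinate.

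Next I would evaluate these two factors. Since $(w_1)$ is prime, its only divisors are $\mathcal{O}_K$ and $(w_1)$, so $C_m(\fr_m)=1$ if $(w_1)\nmid\fr_m$ and $C_m(\fr_m)=1-|(w_1)|$ if $(w_1)\mid\fr_m$. For $D_j$, substituting $\fa_j=\fu_j\fb_j$ and using the multiplicative identity $\sum_{\fb\mid\fn}\mu(\fb)|\fb|/\phi(\fb)=\mu(\fn)/\phi(\fn)$ for squarefree $\fn$ (from the Euler factor $1-|\fp|/\phi(\fp)=-1/\phi(\fp)$) gives $D_j(\fu_j,\fr_j)=[\fu_j\mid\fr_j]\,\mu(\fr_j)|\fu_j|/\phi(\fr_j)$. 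After the change of variable $\fs_j=\fr_j/\fu_j$ for $j\neq m$ and the simplifications $\mu(\fu_j)^2=1$ and $\phi(\fr_j)=\phi(\fu_j)\phi(\fs_j)$, one arrives at
\[
y^{(m)}_{\ul\fu}(w_1) \;=\; \prod_{j\neq m}\frac{g(\fu_j)|\fu_j|}{\phi(\fu_j)^2}\sum_{\fr_m,\ul\fs}\frac{y_{\ul\fr}\,C_m(\fr_m)}{\phi(\fr_m)}\prod_{j\neq m}\frac{\mu(\fs_j)}{\phi(\fs_j)^2},
\]
with $\fr_j=\fu_j\fs_j$ for $j\neq m$ and $(\fs_j,\fu_j)=1$ automatic from the squarefreeness of $\fr_j$ in the support of $y_{\ul\fr}$.

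To recover the stated identity I would restrict to $\fs_j=\mathcal{O}_K$ for every $j\neq m$ and split the remaining sum over $\fr_m$ according to whether $(w_1)\mid\fr_m$; writing $\fr_m=(w_1)\fs_m$ in the latter case produces the two displayed sums of the claim, with the factor $|(w_1)|/\phi((w_1))$ coming from $\phi((w_1)\fs_m)=\phi((w_1))\phi(\fs_m)$. Two error sources must then be absorbed: (i) the prefactor $\prod_{j\neq m}g(\fu_j)|\fu_j|/\phi(\fu_j)^2=\prod_{\fp\mid\fu_j,\,j\neq m}\bigl(1-(|\fp|-1)^{-2}\bigr)$ is not exactly $1$, and (ii) the tuples with some $\fs_j\neq\mathcal{O}_K$ were dropped. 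Because $y_{\ul\fr}$ is supported on ideals coprime to $\fm$, every prime factor of any $\fu_j$ and every nontrivial $\fs_j$ has norm exceeding $D_0$; consequently the prefactor equals $1+O(1/D_0)$ and $\sum_{|\fs|>D_0}\mu^2(\fs)/\phi(\fs)^2\ll 1/D_0$. Combined with $|y_{\ul\fr}|\leq y_{\max}$ and the Mertens-type bound $\sum_{|\fr_m|\leq R,(\fr_m,\fm)=1}1/\phi(\fr_m)\ll(\phi(\fm)/|\fm|)\log(R/D_0)$ (via Lemma~\ref{lem:merten's theorem on number field}), each source is bounded by $O\bigl(y_{\max}(\phi(\fm)/|\fm|)\log(R/D_0)/D_0\bigr)$, matching the stated remainder.

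The main technical hurdle is this last step: collating both deviations into a single error term while producing the $\phi(\fm)/|\fm|$ factor together with $\log(R/D_0)$ (rather than $\log R$) requires careful use of the coprimality of $\ul\fu$ and of the discarded tail $\ul\fs$ to $\fm$. Once the factorization of the inner $\ul\fa$-sum is carried out cleanly, the rest is routine multiplicative-function arithmetic.
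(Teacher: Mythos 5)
Your proposal follows the paper's own argument essentially verbatim: substitute the inversion formula for $\lambda_{\ul\fa}$ into the definition of $y^{(m)}_{\ul\fu}(w_1)$, interchange summation, factor the inner sum over $\ul\fa$ across coordinates (your $C_m$ and $D_j$ are exactly the one-variable sums the paper evaluates, including the use of $(w_1)$ being prime to reduce $C_m$ to $1-|(w_1)|\mathds{1}_{(w_1)\mid\fr_m}$ and the M\"obius identity giving $D_j=|\fu_j|\mu(\fr_j)/\phi(\fr_j)$), then restrict to $\fr_j=\fu_j$ for $j\neq m$, split the surviving $\fr_m$-sum by whether $(w_1)\mid\fr_m$, and absorb both the off-diagonal tail and the prefactor $\prod_{j\neq m}g(\fu_j)|\fu_j|/\phi(\fu_j)^2=1+O(1/D_0)$ into the stated error via the coprimality to $\fm$. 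No genuine divergence from the paper's proof.
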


\begin{proof}
Using (\ref{inversion}), we get
\begin{align*}
 &y_{\fu_1,\ldots,\fu_k}^{(m)}(w_1)
=\prod_{j\neq m} \mu(\fu_j)g(\fu_j)\sum_{\substack{\ul\fa\\\fu_j\mid \fa_j\forall j\\ \fa_m\mid (w_1)}}\frac{1}{\prod_{j\neq m}\phi(\fa_j)}\prod_{i=1}^{k}|\fa_i|\mu(\fa_i)
\sum_{\substack{\ul\fr \\ \fa_i \mid \fr_i \forall i}} 
\frac{y_{\ul\fr}}{\prod_{i=1}^{k}\phi(\fr_i)}\\
&=\prod_{j\neq m} \mu(\fu_j)g(\fu_j)\sum_{\substack{\ul\fr \\ \fu_j \mid \fr_j \forall j}} 
\frac{y_{\ul\fr}}{\prod_{i=1}^{k}\phi(\fr_j)}\sum_{\substack{\ul\fa\mid \ul\fr\\ 
\fu_j\mid \fa_j \forall j \\ \fa_m\mid (w_1)}}\frac{\prod_{j=1}^{k}|\fa_j|\mu(\fa_j)}{\prod_{j\neq m}\phi(\fa_j)}\\
&=\prod_{j\neq m} \mu(\fu_j)g(\fu_j)\sum_{\substack{\ul\fr \\ \fu_j \mid \fr_j \forall j}} 
\frac{y_{\ul\fr}}{\prod_{j=1}^{k}\phi(\fr_j)}\prod_{j\neq m}\frac{|\fu_j|\mu(\fr_j)}{\phi(\fr_j)}\left(1-|(w_1)|\mathds{1}_{(w_1)\mid \fr_m}\right)\\
&=\prod_{j\neq m} \mu(\fu_j)g(\fu_j)\bigg(\sum_{\substack{\ul\fr ; \fu_j \mid \fr_j\\ \fu_m=1}} 
\frac{y_{\ul\fr}}{\prod_{i=1}^{k}\phi(\fr_j)}\prod_{j\neq m}\frac{|\fu_j|\mu(\fr_j)}{\phi(\fr_j)}-|(w_1)|\sum_{\substack{\ul\fr ; \fu_j \mid \fr_j \\ (w_1)\mid \fr_m}} 
\frac{y_{\ul\fr}}{\prod_{j}\phi(\fr_j)}\prod_{j\neq m}\frac{|\fu_j|\mu(\fr_j)}{\phi(\fr_j)}\bigg)
\end{align*}
where $\mathds{1}_{(w_1)\mid \fr_m}$ is the indicator function which takes value $1,$ 
if $(w_1)\mid \fr_m$ and $0$ otherwise.
We see from the support of $y_{\fr_1, \ldots, \fr_k}$ that we may restrict the 
summation over $\fr_j$ to $(\fr_j, \mathfrak{m})=1.$ 
The main term is given by $\fr_j=\fu_j \forall j$, for all other terms
there exists $j\neq m$ such that $|\fr_j|>D_0 |\fu_j|$. 
Therefore the error term is bounded above by
\begin{align*}
y_{\max}\bigg(\prod_{j\neq m} |\fu_j|g(\fu_j)\bigg)
\Bigg(\sum_{\substack{\fr_j>D_{0}\fu_j \\ \fu_j\mid \fr_j}}\frac{\mu^2(\fr_j)}{\phi(\fr_j)^2}\Bigg)&
 \Bigg(|(w_1)|\sum_{\substack{|\fr_m|<R; (w_1)\mid \fr_m \\ (\fr_m, \mathfrak{m})=1}}\frac{\mu(\fr_m)^2}{\phi(\fr_m)}\Bigg)
 \prod_{\substack{1\leq i\leq k\\i\neq j,m}}\bigg(\sum_{\fu_i\mid \fr_i} \frac{\mu(\fr_i)^2}{\phi(\fr_i)^2}\bigg)\\
 &\ll y_{\max} \frac{\phi(\mathfrak{m})}{|\mathfrak{m}|}\frac{\log(R/D_{0})}{D_{0}}.
\end{align*}
The main term given by $\fr_j=\fu_j \forall j\neq m$ is
\begin{align*}
y_{\substack{\fu_1,\ldots,\fu_k \\ \fu_m=1}}^{(m)}(w_1)=\prod_{j\neq m} \frac{|\fu_j|g(\fu_j)}{\phi(\fu_j)^2}\Bigg(\sum_{\fr_m} 
\frac{y_{\fu_1, \ldots, \fu_{m-1},\fr_m,\fu_{m+1},\ldots, \fu_k}}{\phi(\fr_m)}-|(w_1)|\sum_{\substack{\fr_m\\ (w_1)\mid \fr_m}} 
\frac{y_{\fu_1, \ldots, \fu_{m-1},\fr_m,\ldots, \fu_k}}{\phi(\fr_m)}\Bigg).
\end{align*}
Now the proof can be completed by noting that  
$\frac{g(\mathfrak{p})|\mathfrak{p}|}{\phi(\mathfrak{p})^2}=1+O(|\mathfrak{p}|^{-2})$ 
and $\fr_m=(w_1)\fs_m$.

\end{proof}

\section{Choosing the weights}
\label{choosing weights}

 For a real valued piecewise differentiable function $F$ on $\mathcal{R}_k$ as in Proposition \ref{prop:main}
 we define
 $$y_{\fr_1, \ldots, \fr_k}:
 =\mu\left(\prod_{i=1}^{k}\fr_i \right)^2 
 F\left( \frac{\log|\mathfrak{r}_1|}{\log R},\dots, \frac{\log |\mathfrak{r}_k|}{\log R}\right).$$
 \noindent
 Note that, $y_{\fr_1, \ldots, \fr_k}$ is supported on square-free $\fr =\prod_{i=1}^{k}\fr_i$ 
 such that $(\fr,\mathfrak{m})=1.$ Hence
\begin{align*}
y_{\substack{\fu_1,\ldots,\fu_k \\ \fu_m =1}}^{(m)}(w_1)=&\sum_{\substack{|\fr|\leq R \\ \left(\fr, \mathfrak{m}\prod_{j\neq m}\fu_j\right)=1}} 
\frac{\mu(\fr)^2}{\phi(\fr)}F\left( \frac{\log|\mathfrak{u}_1|}{\log R},\dots, \frac{\log |\mathfrak{r}|}{\log R}, \ldots ,\frac{\log |\mathfrak{u}_k|}{\log R}\right)\\
&-\frac{|(w_1)|}{\phi((w_1))}\sum_{\substack{|\fs|\leq R/|w_1| \\ \left(\fs, \mathfrak{m} (w_1)\prod_{j\neq m}\fu_j\right)=1}} 
\frac{\mu(\fs)^2}{\phi(\fs)}F\left( \frac{\log|\mathfrak{u}_1|}{\log R},\dots, \frac{\log |\mathfrak{s}(w_1)|}{\log R}, \ldots ,\frac{\log |\mathfrak{u}_k|}{\log R}\right)\\
&
+ O\left(y_{max}\frac{\phi(\mathfrak{m})}{|\mathfrak{m}|}\frac{\log R}{D_{0}}\right)
=: S_{1}'+S_{2}'+ O\left(y_{max}\frac{\phi(\mathfrak{m})}{|\mathfrak{m}|}\frac{\log R}{D_{0}}\right). 
\end{align*}

\vspace{2mm}
\noindent
\textbf{Estimation of $S_1'$}.
To use Lemma \ref{lem:Dimensional sieve} we set
\[ 
\gamma(\fp)=\begin{cases}
                                             1 & \text{ if } (\fp, \mathfrak{m}\prod_{j\neq m} \fu_j )=1\\
                                             0 & \text{ otherwise} 
                                            \end{cases} \text { and }
h(\fp)=\frac{\gamma(\fp)}{|\fp|-\gamma(\fp)}\; .
\]
and
\[
 F_{\max}=\sup_{t\in [0,1]} ( |F(t)|+|F'(t)|).
\]
The singular series in Lemma \ref{lem:Dimensional sieve} is easily computed to be
\[
\mathfrak{S}
=\frac{\phi(\mathfrak{m}\prod_{j\neq m} \fu_j)}{\mid\mathfrak{m}\prod_{j\neq m} \fu_j \mid}
\]
and also $L\ll \log\log R$. Thus we get
\[
 S_1'=\frac{\phi(\mathfrak{m})}{|\mathfrak{m}|}\prod_{j\neq m} \frac{\phi(\fu_j)}{|\fu_j|}
 c_K(\log R)\int_0^1 F\left( \frac{\log|\fu_1|}{\log R},\ldots, t_m, \ldots,  \frac{\log|\fu_k|}{\log R}\right)dt_m
 +O\left( F_{\max} \log\log R\right)
\]
where $c_K=Res_{s=1}\zeta_K(s)$.

\vspace{2mm}
\noindent
\textbf{Estimation of $S_2'$}.
Observe that 
\[
 \frac{\log(|(w_1)\fs|)}{\log(R/|w_1|)}
 = \frac{\log R}{\log(R/|w_1|)}
 \left( \frac{\log|\fs|}{\log R} + \frac{\log|(w_1)|}{\log R}\right).
\]
Therefore by Lemma \ref{lem:Dimensional sieve}, we get
\begin{align*}
 S_2'=&
 -c_K \frac{\phi(\mathfrak{m})}{|\mathfrak{m}|}\prod_{j\neq m} \frac{\phi(\fu_j)}{|\fu_j|}
 (\log R)\int_{\frac{\log|w_1|}{\log R}}^1 
 F\left( \frac{\log|\fu_1|}{\log R},\ldots, s_m, \ldots,  \frac{\log|\fu_k|}{\log R}\right)ds_m \\
& +O\left( F_{\max} \log\log R\right).
\end{align*}

\noindent
Putting $S_1'$ and $S_2'$ together, we get
\begin{align}\label{weights}
 y_{\substack{\fu_1,\ldots,\fu_k \\ \fu_m =1}}^{(m)}(w_1)
 &= \frac{\phi(\mathfrak{m})}{|\mathfrak{m}|}c_K\prod_{j\neq m} 
 \frac{\phi(\fu_j)}{|\fu_j|}
(\log R) \left( F^{(m)}_{\fu_1,\ldots, \fu_k}- 
F^{(m)}_{\fu_1,\ldots, \fu_k}(w_1)\right)\\ \nonumber
&+O\left( F_{\max} \log\log R\right) 
+ O\left(F_{\max}\frac{\phi(\mathfrak{m})}{|\mathfrak{m}|}\frac{\log R}{D_{0}}\right)
\end{align}
where
\[
 F^{(m)}_{\fu_1,\ldots, \fu_k}
 =\int_0^1 F\left( \frac{\log|\fu_1|}{\log R},\ldots, t_m, \ldots,  \frac{\log|\fu_k|}{\log R}\right)dt_m
\]
and
\[
F^{(m)}_{\fu_1,\ldots, \fu_k}(w_1)=
\int_{\frac{\log|w_1|}{\log R}}^1 
 F\left( \frac{\log|\fu_1|}{\log R},\ldots, s_m,\ldots ,  \frac{\log|\fu_k|}{\log R}\right)ds_m.
\]

\section{Proof of Proposition \ref{prop:main}}
\label{Proof: main}

Using the value of  $y_{\substack{\fu_1,\ldots,\fu_k }}^{(m)}(w_1)$ given by 
equation (\ref{weights}) in Lemma \ref{lem:S_{2m}(w_1)}
we get
\begin{align*}
 S_{2m}(w_1)= &
 \frac{1}{\phi(\mathfrak{m})}
 \sum_{\substack{\ul\fu \\ \fu_m=1}}
 \prod_{j\neq m}\frac{\mu(\fu_j)^2}{g(\fu_j)}
\left(\frac{\phi(\mathfrak{m})}{|\mathfrak{m}|}c_K\log R
\prod_{j\neq m}\frac{\phi(\fu_j)}{|\fu_j|}
 \left(F^{(m)}_{\fu_1,\ldots, \fu_k}- F^{(m)}_{\fu_1,\ldots, \fu_k}(w_1)\right)\right)^2\\
& + O\left( (F_{\max})^2
 (\log R)^{k+1}\frac{(\phi(\mathfrak{m}))^{k}}{|\mathfrak{m}|^{k+1}}\frac{1}{D_0}\right).
\end{align*}

Setting $Y':=Y^{1/2} $ and the above equation in Lemma \ref{lem:sum S_2m}, we have
\begin{align*}
S_{2m} &=\frac{\phi(\mathfrak{m})}{|\mathfrak{m}|^2}c_K^2 (\log R)^2 
\sum_{\substack{w_1\\Y\leq |w_1|\leq R}}\pi^{\flat}\left(\frac{N}{|w_1|^{1/2}}\right)
\sum_{\substack{\ul\fu \\ \fu_m=1}}\prod_{j\neq m}\frac{\phi(\fu_j)^2}{g(\fu_j)|\fu_j|^2}
\left(\widetilde{F}_{\ul\fu}^{(m)}(w_1)\right)^2 \\
&+ \frac{\phi(\mathfrak{m})}{|\mathfrak{m}|^2}c_K^2 (\log R)^2 
\sum_{\substack{w_1\\R<|w_1| \leq N^{2b}}}\pi^{\flat}\left(\frac{N}{|w_1|^{1/2}}\right)
\sum_{\substack{\ul\fu \\ \fu_m=1}}\prod_{j\neq m}\frac{\phi(\fu_j)^2}{g(\fu_j)|\fu_j|^2}
\left(F_{\ul\fu}^{(m)}\right)^2\\
&+O\bigg((F_{\max})^2
 (\log R)^{k+1}\frac{(\phi(\mathfrak{m}))^{k}}{|\mathfrak{m}|^{k+1}}\frac{1}{D_0}\sum_{w_1\in \mathcal{ P}_1^0(Y',N^b)}\pi^{\flat}\left(\frac{N}{|w_1|^{1/2}}\right) \bigg)
 +O\left(F_{\max}^2|A(N)|\right)
\end{align*}
where
\[
\widetilde{F}^{(m)}_{\fu_1,\ldots, \fu_k}(w_1)=
\int_{0}^{\frac{\log|w_1|}{\log R}} 
 F\left( \frac{\log|\fu_1|}{\log R},\ldots, t_m, \ldots,  \frac{\log|\fu_k|}{\log R}\right)dt_m .
\]
Using Lemma \ref{lem:PNT2} we can say that 
$\pi^{\flat}\left(\frac{N}{|w_1|^{1/2}}\right)=|\mathcal{P}(N)|
\frac{\alpha(|w_1|)}{|w_1|}+O_{K}\left(\frac{N^2}{|w_1|(\log N)^2}\right)$, 
where $\alpha(u):= \frac{\log (N^2)}{\log \left({N^2}/u\right)}.$\\
Using this the main term of $S_{2m}$ becomes
\begin{align*}
&=\frac{\phi(\mathfrak{m})}{|\mathfrak{m}|^2}c_K^2 (\log R)^2 |\mathcal{P}(N)|
\sum_{\substack{w_1\\Y\leq |w_1|\leq R}}\frac{\alpha(|w_1|)}{|w_1|}
\sum_{\substack{\ul\fu \\ \fu_m=1}}\prod_{j\neq m}\frac{\phi(\fu_j)^2}{g(\fu_j)|\fu_j|^2}
\left(\widetilde{F}_{\ul\fu}^{(m)}(w_1)\right)^2 \\
&+ \frac{\phi(\mathfrak{m})}{|\mathfrak{m}|^2}c_K^2 (\log R)^2 |\mathcal{P}(N)|
\sum_{\substack{w_1\\R< |w_1|\leq N^{2b}}}\frac{\alpha(|w_1|)}{|w_1|}
\sum_{\substack{\ul\fu \\ \fu_m=1}}\prod_{j\neq m}\frac{\phi(\fu_j)^2}{g(\fu_j)|\fu_j|^2}
\left(F_{\ul\fu}^{(m)}\right)^2 \\
& =:\frac{\phi(\mathfrak{m})}{|\mathfrak{m}|^2}c_K^2 (\log R)^2 |\mathcal{P}(N)| 
\left(\sum_{\substack{w_1\\Y\leq |w_1|\leq R}}\frac{\alpha(|w_1|)}{|w_1|}S_4
+\sum_{\substack{w_1\\R< |w_1|\leq N^{2b}}}\frac{\alpha(|w_1|)}{|w_1|}S_5\right). 
\end{align*}
\textbf{Estimation of $S_4$ and $S_5$}.
To calculate both $S_4$ and $S_5$ we use Lemma \ref{lem:Dimensional sieve} 
with 
\[
h(\mathfrak{p})=\frac{\gamma(\mathfrak{p})}{|\mathfrak{p}|-\gamma(\mathfrak{p})}
\text{ and } \gamma(\fp)=\begin{cases}
                                             1-\frac{|\mathfrak{p}|^2-3|\mathfrak{p}|+1}{|\mathfrak{p}|^3-
                                             |\mathfrak{p}|^2-2|\mathfrak{p}|+1} & \text{ if } (\fp, \mathfrak{m})=1\\
                                             0 & \text{ otherwise.} 
                                            \end{cases}
\]
The singular series can be easily computed to be 
$\mathfrak{S}=\frac{\phi(\mathfrak{m})}{\mid\mathfrak{m}\mid}+O\left(\frac{\phi(\mathfrak{m})}{\mid\mathfrak{m}\mid D_{0}}\right)$ and also $L\ll \log D_{0}.$
Recalling the coprimality conditions $S_4$ can be written as 
\begin{align*}
S_4= \sum_{\substack{\fu_1, \ldots, \fu_{m-1},\fu_{m+1}, \ldots, \fu_k \\ \left(\fu_i,\fu_j\right)=1 \forall i\neq j\\ \left(\fu_j, \mathfrak{m}\right)=1}}\prod_{j\neq m}\frac{\phi(\fu_j)^2}{g(\fu_j)|\fu_j|^2}
\left(\widetilde{F}_{\ul\fu}^{(m)}(w_1)\right)^2 .
\end{align*}
We note that, two ideals $\fa$ and $\fb$ with $(\fa, \mathfrak{m})=(\fb, \mathfrak{m})=1$ 
but $(\fa, \fb)\neq 1$ must have a common prime factor with norm greater than $D_{0}.$ Thus we can drop the requirement that $\left(\fu_i, \fu_j\right)=1,$ at the cost of an error of size
\begin{align*}
&\ll F_{\max}^2 \sum_{|\mathfrak{p}|>D_{0}}\sum_{\substack{\mid\fu_1\mid, \ldots, \mid\fu_{m-1}\mid,\mid\fu_{m+1}\mid, \ldots, \mid\fu_k\mid <R \\ \mathfrak{p}\mid \fu_i, \fu_j\forall i\neq j\\ \left(\fu_j, \mathfrak{m}\right)=1}}
\prod_{j\neq m}\frac{\mu(\fu_j)^2\phi(\fu_j)^2}{g(\fu_j)|\fu_j|^2}\\
&\ll F_{\max}^2 \bigg(\sum_{|\mathfrak{p}|>D_{0}}\frac{\phi(\mathfrak{p})^4}{g(\mathfrak{p})^2|\mathfrak{p}|^4}\bigg)
\bigg(\sum_{\substack{\mid\fr\mid <R \\ \left(\fr, \mathfrak{m}\right)=1}}
\frac{\mu(\fr)^2\phi(\fr)^2}{g(\fr)|\fr|^2}\bigg)^{k-1}
\ll F_{\max}^2 \left(\frac{\phi(\mathfrak{m})}{|\mathfrak{m}|}\right)^{k-1}\frac{(\log R)^{k-1}}{D_{0}}.
\end{align*} 
Thus it is enough to evaluate the following sums.
\[
\sum_{\substack{\fu_1, \ldots, \fu_{m-1},\fu_{m+1}, \ldots, \fu_k \\  \left(\fu_j, \mathfrak{m}\right)=1}}\prod_{j\neq m}\frac{\phi(\fu_j)^2}{g(\fu_j)|\fu_j|^2}
\left(\widetilde{F}_{\ul\fu}^{(m)}(w_1)\right)^2 \text{ and }
\sum_{\substack{\fu_1, \ldots, \fu_{m-1},\fu_{m+1}, \ldots, \fu_k \\ \left(\fu_j, \mathfrak{m}\right)=1}}\prod_{j\neq m}\frac{\phi(\fu_j)^2}{g(\fu_j)|\fu_j|^2}
\left(F_{\ul\fu}^{(m)}\right)^2 .
\]
Using Lemma \ref{lem:Dimensional sieve} we have,
\begin{align*}
S_4=\left(\frac{\phi(\mathfrak{m})}{|\mathfrak{m}|}\right)^{k-1} c_K^{k-1}(\log R)^{k-1} I_{3k}^{(m)}(F(w_1)) +O\left(F_{\max}^2 \left(\frac{\phi(\mathfrak{m})}{|\mathfrak{m}|}\right)^{k-1}\frac{1}{D_{0}}(\log R)^{k-1}\right)
\end{align*}
and 
\begin{align*}
S_5=\left(\frac{\phi(\mathfrak{m})}{|\mathfrak{m}|}\right)^{k-1} c_K^{k-1}(\log R)^{k-1} I_{2k}^{(m)}(F) +O\left(F_{\max}^2 \left(\frac{\phi(\mathfrak{m})}{|\mathfrak{m}|}\right)^{k-1}\frac{1}{D_{0}}(\log R)^{k-1}\right)
\end{align*}
where
\[
I_{3k}^{(m)}(F(w_1))= \idotsint_{\mathcal{R}_{k-1}} \left( \int_0^{T_m\left(\frac{\log |w_1|}{\log R}\right)} F(x_1,\dots,x_k) \, dx_m\right)^2 dx_1 \dots dx_{m-1}dx_{m+1}\dots dx_k
\]
and 
\[
I_{2k}^{(m)}(F)= \idotsint_{\mathcal{R}_{k-1}} \left( \int_0^{T_m} F(x_1,\dots,x_k) \, dx_m\right)^2 dx_1 \dots dx_{m-1}dx_{m+1}\dots dx_k 
\]
where $T_m$ and $T_m(y)$ are as defined in the statement of Proposition \ref{prop:main}.
We note that the integral $I_{2k}^{(m)}(F)$ is independent of prime element
$w_1$ of $\mathcal{O}_K$. Using the estimations  of the sums $S_4$ and $S_5$ 
the term $S_{2m}$ becomes
\begin{align*}
S_{2m}&=\\
&\frac{\phi(\mathfrak{m})^k}{|\mathfrak{m}|^{k+1}}(c_K\log R)^{k+1} |\mathcal{P}(N)|\biggl(
\sum_{\substack{w_1\\ Y\leq |w_1|\leq R}}\frac{\alpha(|w_1|)}{|w_1|}I_{3k}^{(m)}(F(w_1))
+ I_{2k}^{(m)}(F) \sum_{\substack{w_1\\R< |w_1|\leq N^{2b}}}\frac{\alpha(|w_1|)}{|w_1|}\biggr)\\
&+O\biggl((F_{\max})^2
 (\log R)^{k+1}\frac{\phi(\mathfrak{m})^{k}}{|\mathfrak{m}|^{k+1}}\frac{1}{D_0}\sum_{Y< |w_1|\leq N^{2b}}\pi^{\flat}\left(\frac{N}{|w_1|^{1/2}}\right)\biggr).
\end{align*}
Finally it remains to calculate the following sums
\[
S_6:= \sum_{\substack{w_1\\R< |w_1|\leq N^{2b}}}\frac{\alpha(|w_1|)}{|w_1|} \text{ and }
S_7:= \sum_{\substack{w_1\\Y\leq |w_1|\leq R}}\frac{\alpha(|w_1|)}{|w_1|} \left(V^{(m)}\left(\frac{\log |w_1|}{\log R}\right)\right)^2
\]
where $V^{(m)}(y):=\int_{0}^{y}F(x_1, \ldots, x_k)dx_m .$

\vspace{2mm}
\noindent
Using Lemma \ref{lem:PNT1},  we have
\begin{align*}
 &\sum_{w\in \mathcal {P}^{0}(u^{1/2})} \log |w|
 =m_K u+ E(u)\ \text{ where } E(u)=O_K \left( \frac{u}{\log u}\right)
\end{align*}
where $m_K=\frac{\omega_K}{h_K}.$
\noindent
From the above estimations, we get
\begin{align*}
 S_7 &=m_K
 \int_{Y}^R \alpha(u)\left( V^{(m)}\left(\frac{\log u}{\log R}\right)\right)^2 \frac{du}{u\log u}
+ \int_{Y}^R \alpha(u)\left( V^{(m)}\left(\frac{\log u}{\log R}\right)\right)^2 \frac{dE(u)}{u\log u}\\
&:=S_8+S_9.
 \end{align*}
Putting  $u=R^y$, first integral $S_8$ gives main term for $S_7$ which is
\[
S_8=m_K \int_{B'\eta}^1 \frac{B'}{y(B'-y)} \left(V^{(m)}(y)\right)^2 dy
\]
where
\[ 
\eta=\frac{\log Y}{\log (N^2)}, \  \ B'=\frac{\log (N^2)}{\log R}.
 \]
Second integral $S_9$ giving error term of $S_7$ can be estimated as
\[ 
 S_9\ll (\log R)^{-1}.
\]
Since $R=N^{\vartheta}(\log N)^{-C}$ 
we observe that 
$$B'=B+O\left(\frac{\log \log N}{\log N}\right)$$
where $B=\frac{2}{\vartheta}$ as defined in Proposition \ref{prop:main}.
Therefore combining these estimations we have
\[
S_7= m_K\int_{B\eta}^1 \frac{B}{y(B-y)} \left(V^{(m)}(y)\right)^2 dy 
+O\left(\frac{\log \log N}{\log N}\right).
\]
By using the same method we have
\[
S_6= m_K\int_{1}^{B/2} \frac{B}{y(B-y)} dy 
+O\left(\frac{\log \log N}{\log N}\right).
\]
Therefore we conclude that
\begin{align}\label{Assymp:S2m}
S_{2m}=& m_K\frac{\phi(\mathfrak{m})^k}{|\mathfrak{m}|^{k+1}}(c_K\log R)^{k+1} 
|\mathcal{P}(N)|
\left(\widetilde{I}_{2k}^{(m)}(F)+\widetilde{I}_{3k}^{(m)}(F)\right)\\ \nonumber
+& O\left(F_{\max}^2(\log R)^{k+1}\frac{(\phi(\mathfrak{m}))^{k}}{|\mathfrak{m}|^{k+1}}
\frac{1}{D_0} |\mathcal{P}(N)|\right)
\end{align}

 Recall that 
 \[S_2=\sum_{1\leq m\leq k} S_{2m}.\] 
Therefore Proposition \ref{prop:main} follows from \ref{Assymp:S2m}.

\section{proof of theorem \ref{main theorem}, corollary \ref{corollary: gap 2 number field} and corollary \ref{corollary: gap 8 number fields}}
\label{proofs}
\noindent
We start with the following corollary 
of the Proposition \ref{prop:main}.
\begin{corollary}\label{application of main proposition}
Let $K$ be an imaginary quadratic number field and $m_K$ be its Mitsui constant.
Suppose that the primes $\mathcal{P}$ and $G_2^K$-numbers have a common level of 
distribution $0<\vartheta \leq 1$. Let $(\mathfrak{h}_1, \ldots, \mathfrak{h}_k)\in 
\mathcal{O}_K^k$ be an admissible tuple. 
Let $B, \tilde{I}_{1k}(F), \tilde{I}_{2k}^{(m)}(F)$ and $\tilde{I}_{3k}^{(m)}(F)$, 
$1\leq m\leq k$ be defined as in the statement of Proposition \ref{prop:main}. 
Let $\mathcal{S}_k$ denote the set of piecewise differentiable functions 
$F: [0, 1]\to \mathbb{R}$ supported on $\mathcal{R}_k$ such that  
$\tilde{I}_{1k}(F), \tilde{I}_{2k}^{(m)}(F)$ and $\tilde{I}_{3k}^{(m)}(F)$, 
are non-zero for all $m$ in $1\le m\le k$. Let
\begin{align*}
\tilde{M}_k:=\sup_{F\in \mathcal{S}_k}\frac{\sum_{m=1}^{k}(\tilde{I}_{2k}^{(m)}(F)+
\tilde{I}_{3k}^{(m)}(F))}{\tilde{I}_{1k}(F)} \quad \text{ and } 
\quad \tilde{r}_k:= \bigg\lceil \frac{m_K\tilde{M}_k}{B}\bigg\rceil
\end{align*}
Then there are infinitely many $\alpha\in \mathcal{O}_K$ such that at least $\tilde{r}_k$ of the $\alpha+\mathfrak{h_1}, \ldots, \alpha+\mathfrak{h}_k$ are $G_2^{K}$-numbers.
\end{corollary}
\begin{proof}
Since each summand is non-negative, if $S:=S_2 - \rho S_1>0$ 
for some positive $\rho$, then there is an $\alpha\in A(N)$  
such that $\alpha+\fh_1,\dots,\alpha+\fh_k$ contains atleast $[\rho]+1$  
$G_2^K$-numbers. 
Therefore it is enough to show that $S>0$ for all sufficiently large $N$.

Fix a $\delta>0$ and $0<\epsilon<\frac{\delta B}{m_K}$, 
then choose $\tilde{F}\in \mathcal{S}_K$ so that 
\[
\sum_{m=1}^{k}(\tilde{I}_{2k}^{(m)}(\tilde{F})+\tilde{I}_{3k}^{(m)}(\tilde{F}))>(\tilde{M}_k-\epsilon)\tilde{I}_{1k}(\tilde{F}).
\] 
Using Proposition \ref{prop:main}, we obtain
\begin{align*}
S& =(1+o(1)) \frac{(\varphi(\mathfrak{m})^k |A(N)| (c_K \log R)^{k}}
{|\mathfrak{m}|^{k+1}} \bigg(\frac{m_K}{B}\sum_{m=1}^{k}(\tilde{I}_{2k}^{(m)}(F)+\tilde{I}_{3k}^{(m)}(F))-\rho \tilde{I}_{1k}(F)\bigg)\\
& \geq \frac{(\varphi(\mathfrak{m})^k |A(N)| (c_K \log R)^{k}}
{|\mathfrak{m}|^{k+1}}\tilde{I}_{1k}(\tilde{F}) 
\bigg(\frac{m_K}{B}(\tilde{M}_k -\epsilon)-\rho \bigg).
\end{align*}
If $\rho=\frac{ m_K\tilde{M}_k}{B}-\delta$, then 
$S>0$ for large $N$. Since $\delta$ is arbitrary, there are 
infinitely many $\alpha \in \mathcal{O}_K$ such that at least 
$\big\lceil \frac{m_K\tilde{M}_K}{B}\big\rceil$ of the 
$\alpha+\mathfrak{h}_1, \ldots, \alpha+\mathfrak{h}_k$ are $G_2^K$-numbers.
\end{proof}

To complete the proof of the Theorem \ref{main theorem} it is enough to show that
$\tilde{r}_k \rightarrow \infty$ as $k \rightarrow \infty$.
Since the integrals $\tilde{I}_{3k}^{(m)}(F)$ are positive
\[
 \tilde{M}_k\ge \sup_{F\in \mathcal{S}_k}
 \frac{\sum_{m=1}^{k}\tilde{I}_{2k}^{(m)}(F)}{\tilde{I}_{1k}(F)} 
\]
It follows from Section 7 of \cite{MAY} that 
\[
\sup_{F\in \mathcal{S}_k}
 \frac{\sum_{m=1}^{k}\tilde{I}_{2k}^{(m)}(F)}{\tilde{I}_{1k}(F)} 
 >c\log k
\]
for sufficiently large $k$ and an absolute constant $c>0$
( note that $\tilde{I}_{2k}^{(m)}(F)$ is a
positive constant multiple of $J_k^{(m)}(F)$ in \cite{MAY}). 
This completes the proof as $\tilde{r}_k$ is
directly proportional to $\tilde{M}_k$.

\begin{remark}
 Comparing the integral $\tilde{I}_{2k}^{(m)}(F)$ with $J_k^{(m)}(F)$ as in
 \cite{MAY}, we can show that $\tilde{M}_k\ge (1.0986)M_k$ where $M_k$ is as in 
 \cite{MAY}. Since $\omega_K=2$ for fields with class number more than 2,
 From Proposition 4.3 of \cite{MAY}, it follows that
 \[
  \tilde{r}_k> \frac{1.0986}{2h_K}(\log k -2\log\log k -2)
 \]
for sufficiently large $k$. 
We conclude that there exists infinitely many $\alpha\in\mathcal{O}_K$ such that
for any admissible $k$-tuple $(\fh_1,\cdots,\fh_k)$ there is atleast one
$G_2^K$-number among $\alpha+\fh_1,\cdots, \alpha+\fh_k$ 
(i.e $\tilde{r}_k\ge 1$) provided
$$\log k -2\log\log k \ge (1.82)h_K+2$$
and in that case the gap is bounded above by $\fh_k-\fh_1$ where
$(\fh_1,\cdots,\fh_k)$ is an admissible $k$-tuple.
Therefore gap between $G_2^K$-numbers are bounded in terms of class numbers. 
 
\end{remark}

\vspace{.5cm}
\noindent
To prove the Corollary \ref{corollary: gap 2 number field}  stated in Section $1$, we need following lemmas .

\begin{lemma}[Proposition 3.1, \cite{CAS}]
\label{lem:admissible in number field} Suppose that $\mathcal{H}$ is an admissible tuple in $\mathbb{Z}$. Then $\mathcal{H}$ is also an admissible tuple in $\mathcal{O}_K$ for every number field $K$.
\end{lemma}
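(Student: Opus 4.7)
The plan is to fix an arbitrary prime ideal $\mathfrak{p}$ of $\mathcal{O}_K$, let $p$ be the rational prime lying below $\mathfrak{p}$, and produce a residue class modulo $\mathfrak{p}$ not covered by $\mathcal{H}=(h_1,\dots,h_k)\subset\mathbb Z$. The key observation is that the composite map $\mathbb{Z}\hookrightarrow\mathcal{O}_K\twoheadrightarrow\mathcal{O}_K/\mathfrak{p}$ has image equal to the prime subfield $\mathbb{F}_p$ of the residue field $\mathcal{O}_K/\mathfrak{p}\cong\mathbb{F}_{p^f}$, where $f$ is the residue degree of $\mathfrak{p}$ over $p$.

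I would split into two cases according to whether $f=1$ or $f>1$. If $f>1$, then $|\mathcal{O}_K/\mathfrak{p}|=p^f>p$, so the complement $\mathbb{F}_{p^f}\setminus\mathbb{F}_p$ is nonempty; any element of this complement is a residue class modulo $\mathfrak{p}$ that lies outside the image of $\mathbb{Z}$, and is therefore automatically avoided by every $h_i$. If $f=1$, then the natural map $\mathbb{Z}/p\mathbb{Z}\to\mathcal{O}_K/\mathfrak{p}$ is an isomorphism of fields, so the residue class of $h_i$ modulo $\mathfrak{p}$ coincides with its residue class modulo $p$ under this identification. By hypothesis, $\mathcal{H}$ is admissible in $\mathbb{Z}$, so some residue class modulo $p$ is omitted by $\{h_1,\dots,h_k\}$; the corresponding residue class modulo $\mathfrak{p}$ is then omitted as well.

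Since $\mathfrak{p}$ was an arbitrary prime ideal of $\mathcal{O}_K$, this establishes admissibility of $\mathcal{H}$ in $\mathcal{O}_K$. There is really no substantive obstacle here: the argument reduces entirely to the elementary algebraic fact that the prime subfield of $\mathcal{O}_K/\mathfrak{p}$ is the image of $\mathbb{Z}$, together with the trivial counting observation in the ramified/inert case where $p^f>p$. No analytic input from the earlier lemmas is required.
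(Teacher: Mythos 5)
Your proof is correct and is essentially the standard argument (the paper itself does not prove this lemma but cites Proposition~3.1 of~\cite{CAS}, whose proof is along the same lines): the key point, which you identify correctly, is that the reduction map $\mathbb{Z}\to\mathcal{O}_K/\mathfrak{p}$ factors through $\mathbb{Z}/p\mathbb{Z}$ and lands in the prime subfield $\mathbb{F}_p\subseteq\mathbb{F}_{p^f}$, so admissibility mod $p$ propagates to admissibility mod $\mathfrak{p}$, and when $f>1$ there are trivially uncovered classes outside $\mathbb{F}_p$. One could compress the two cases into a single count --- the integers $h_1,\dots,h_k$ occupy at most $p-1\le p^f-1$ classes of $\mathcal{O}_K/\mathfrak{p}$ --- but your case split is equally clean. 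One small terminological slip: in your closing remark you describe $f>1$ as ``the ramified/inert case,'' but ramification (i.e.\ $e>1$) does not imply $f>1$; for instance, a totally ramified prime has $f=1$. What you mean is simply ``the case of residue degree $f>1$,'' and the argument you actually give handles this correctly, so only the summary wording needs fixing.
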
 
\noindent
Using Proposition \ref{Bombieri vinogradov theorem on number fields}, we obtain the following lemma.
\begin{lemma}[Corollary 1.4, \cite{DAR}]
\label{LOD}
Let $K$ be an imaginary quadratic field.
Then product of two primes in $\OK$  have level of distribution $\frac{1}{2}$.
\end{lemma}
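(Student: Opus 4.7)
Since the statement is quoted verbatim from \cite{DAR}, the real task is to outline how Motohashi's proof for $E_2$-numbers over $\mathbb{Z}$ (which delivers level $1/2$) extends to an imaginary quadratic $K$ of class number one. The plan is to reduce the distribution of $G_2^K$-numbers in progressions to that of primes and then invoke Hinz's level-$2/5$ result for primes in such fields \cite{HIN1} in place of Bombieri-Vinogradov. The crucial new ingredient, unavailable in general number fields, is the identity $|\sigma(w)|=|w|^{1/2}$ used throughout the paper, which makes $\alpha\in A(N)$ equivalent to $w_2\in A(N/|w_1|^{1/2})$ whenever $\alpha=w_1w_2$.

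\textbf{Step 1 (Convolution decomposition).} For $(\fa,\fq)=1$, split each $G_2^K$-number $\alpha=w_1w_2\in A(N)$ contributing to $\pi_\beta(N;\fq,\fa)$ by its smaller prime factor $w_1$:
\[
\pi_\beta(N;\fq,\fa)=\sum_{w_1}\pi^\flat\!\left(\frac{N}{|w_1|^{1/2}};\,\frac{\fq}{(\fq,(w_1))},\,\fa\widetilde{w_1}\right)+O(\text{boundary}),
\]
treating the cases $(w_1)\mid\fq$ and $(w_1,\fq)=1$ separately. The main term $\pi_\beta(N)/\phi(\fq)$ decomposes analogously using the prime number theorem in $K$ (Lemma \ref{lem:PNT2}) and Mertens' theorem (Lemma \ref{lem:merten's theorem on number field}), so that $\varepsilon_\beta(N;\fq,\fa)$ becomes an average over $w_1$ of the prime error $\varepsilon(N/|w_1|^{1/2};\fq/(w_1),\,\cdot)$.

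\textbf{Step 2 (Order-swap and Hinz).} Writing $\fq=(w_1)\fs$ and swapping the orders of summation, the total error $\sum_{|\fq|\le Q}\mu^2(\fq)\max_{(\fa,\fq)=1}|\varepsilon_\beta(N;\fq,\fa)|$ reduces to
\[
\sum_{w_1}\sum_{|\fs|\le Q/|w_1|}\mu^2(\fs)\max_{(\fb,\fs)=1}\bigl|\varepsilon(N/|w_1|^{1/2};\fs,\fb)\bigr|.
\]
For each fixed $w_1$, Hinz's level-$2/5$ bound controls the inner sum by $(N^2/|w_1|)/(\log N)^{B}$, provided $Q/|w_1|\le(N^2/|w_1|)^{2/5}(\log N)^{-C}$, i.e.\ $Q\le N^{4/5}|w_1|^{3/5}$. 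This holds for every $|w_1|\ge 1$ when $Q\le|A(N)|^{2/5}(\log N)^{-C}\asymp N^{4/5}(\log N)^{-C}$, exactly the target range. Summing $(N^2/|w_1|)/(\log N)^{B}$ over primes $w_1$ via the Mertens estimate in Lemma \ref{lem:merten's theorem on number field} gives $\ll|A(N)|(\log\log N)/(\log N)^{B}$, which absorbs into $|A(N)|/(\log N)^{A}$ by enlarging $B$.

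\textbf{Main obstacle.} The delicate point is maintaining uniformity in the residue class $\fa$ throughout the decomposition. Hinz's $\varepsilon^*$ involves an internal max over residues coprime to $\fs$, and one must justify that this max survives both the substitution $\fa\mapsto\fa\widetilde{w_1}\pmod{\fs}$ and the factorization $\fq=(w_1)\fs$; this requires a Cauchy-Schwarz maneuver together with the observation that multiplication by the unit $\widetilde{w_1}$ permutes residue classes mod $\fs$. A secondary issue is the treatment of $|w_1|$ near the endpoints (the very short $w_1$-range near $Y'$ and the boundary $|w_1|\sim N^{1/2}$, where Hinz's modulus range degenerates); these are controlled by dyadic splitting of the $w_1$-sum and a trivial $|\beta|\le 1$ estimate on the residual pieces. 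These are precisely the ingredients assembled in \cite{DAR} to obtain the $2/5$ level of distribution for $G_2^K$-numbers in imaginary quadratic fields of class number one.
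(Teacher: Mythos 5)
The paper does not prove this lemma; it is quoted as an external result (Corollary 1.4 of \cite{DAR}), so there is no in-paper proof to measure your sketch against. On its own merits, though, your Step 2 contains a genuine gap.

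You write ``writing $\fq=(w_1)\fs$ and swapping the orders of summation'' and then deduce the constraint $Q\le N^{4/5}|w_1|^{3/5}$, which you observe is satisfied for all $|w_1|\ge1$. But the factorization $\fq=(w_1)\fs$ never occurs in the nontrivial range: in the level-of-distribution sum one takes $(\gamma,\fq)=1$, and if $\alpha=w_1w_2\equiv\gamma\pmod\fq$ then $(w_1)\mid\fq$ would force $(w_1)\mid\gamma$, a contradiction. So $(w_1,\fq)=1$ always, the modulus governing the $w_2$-count is the \emph{unreduced} $\fq$, and the correct constraint from Hinz's theorem applied to $w_2\in A(N/|w_1|^{1/2})$ is
$$Q\le \bigl(N^2/|w_1|\bigr)^{2/5}=N^{4/5}\,|w_1|^{-2/5},$$
which \emph{shrinks} as $|w_1|$ grows rather than growing as you claimed. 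Since $|w_1|$ ranges up to $N^{2b}$ with $b>\vartheta/2$, this forces $Q\le N^{4(1-b)/5}$, and tracking the constraints ($\vartheta/2<b$ and $N^{2\vartheta}\le N^{4(1-b)/5}$) one finds the naive convolution argument can reach at best $\vartheta<1/3$, strictly short of $2/5$. This degradation in the large-$|w_1|$ regime is precisely what Motohashi's induction principle \cite{MOT} is designed to overcome: the level of a Dirichlet convolution is established not by summing Bombieri–Vinogradov pointwise in $w_1$, but by a dispersion/large-sieve argument that exploits cancellation over the moduli in the range where one factor is large. The reference to \cite{MOT} in the bibliography, together with the paper's remark that \cite{DAR} ``extends Motohashi's work,'' indicates that this is the route taken there. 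The uniformity-in-residue issue and the near-endpoint $|w_1|$ ranges you flag are real but subordinate to this central obstruction.
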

\begin{proof}[\bf{Proof of Corollary \ref{corollary: gap 2 number field}}]
Recall that
\begin{align}
S=S_2-\rho S_1
\end{align}\label{align:sum S}
where $S_2$ and $S_1$ are defined as in Proposition \ref{prop:main}. 
We choose $F(t_1, \ldots, t_k)$ to be a symmetric polynomials in $t_1, \ldots, t_k.$
By Proposition \ref{prop:main},  we see that
\begin{align*}
S=(1+o(1)) \frac{(\varphi(\mathfrak{m})^k |A(N)| (c_K \log R)^{k}}
{|\mathfrak{m}|^{k+1}}\widetilde{I}
\end{align*}
where 
\[
\widetilde{I}=\frac{m_K k}{B}\left(\widetilde{I}_{2k}^{(1)}(F)+\widetilde{I}_{3k}^{(1)}(F)\right)-\rho \widetilde{I}_{1k}(F).
\]
We know that
\[ 
\omega_{K_d}=\begin{cases}
                                             4 & \text{ if } d=-1\\
                                             6 & \text{ if } d=-3\\
                                             2 & \text{ otherwise}. 
                                            \end{cases} 
\]
 Therefore Mitsui's constant for imaginary quadratic number fields of class number one are listed below:
 \[ 
m_{K_d}=\begin{cases}
                                             4 & \text{ if } d=-1\\
                                             6 & \text{ if } d=-3\\
                                             2 & \text{ otherwise}. 
                                            \end{cases} 
\]
 
 \noindent
 For Corollary \ref{corollary: gap 2 number field}, we take $k=2, \vartheta=\frac{1}{2}, \rho=1, \eta =\frac{1}{200}, F(t_1, t_2)=1-F_1(t_1, t_2)+F_2(t_1, t_2),$ where $F_1(t_1, t_2)=t_1+t_2$ and $F_2(t_1, t_2)=t_1^2+t_2^2$.

\vspace{2mm}
\noindent
Using SageMath we obtain
\begin{align*}
\widetilde{I}_{12}(F)=0.227778, 
\widetilde{I}_{22}^{(1)}(F)=0.169151, 
\widetilde{I}_{32}^{(1)}(F)=0.150712.
\end{align*}

\noindent
Therefore we have $\widetilde{I}>0 .$
 Hence  Corollary \ref{corollary: gap 2 number field} follows 
 from  Lemma \ref{lem:admissible in number field} 
 considering admissible set $\{0, 2\}$ 
 and invoking Lemma \ref{LOD}.
 
 \end{proof}
 
 \begin{proof}[\bf{Proof of Corollary \ref{corollary: gap 8 number fields}}]
 For imaginary quadratic number field $K_d$ of class number two
 \[
 \omega_{K_d}=2 \quad \text{ and }  \quad m_{K_d}=1.
 \]
 For Corollary \ref{corollary: gap 8 number fields}, we choose $k=4, \vartheta=\frac{1}{2}, \rho=1, \eta =\frac{1}{200}, F(t_1, t_2, t_3, t_4)=1-F_1(t_1, t_2, t_3, t_4)+F_2(t_1, t_2, t_3, t_4),$ where $F_1(t_1, t_2, t_3, t_4)=t_1+t_2+t_3+t_4$ and $F_2(t_1, t_2, t_3, t_4)=t_1^2+t_2^2+t_3^2+t_4^2$.
 
 \vspace{2mm}
 \noindent
 Using SageMath we obtain
\begin{align*}
\widetilde{I}_{14}(F)=0.0095238, 
\widetilde{I}_{24}^{(1)}(F)=0.0044928, 
\widetilde{I}_{34}^{(1)}(F)=0.0059492.
\end{align*}
\noindent
Therefore we have $\widetilde{I}>0 .$
 Hence  Corollary \ref{corollary: gap 8 number fields} follows from  
 Lemma \ref{lem:admissible in number field} considering admissible set
 $\{0,2,6,8\}$ and using
 Lemma \ref{LOD}.

 \end{proof}

\end{document}